\def\R{{\mathbb R}}
\def\<{\langle}
\def\>{\rangle}
\def\E{{\mathbb E}}
\newcommand{\be}{\begin{equation}}
\newcommand{\ee}{\end{equation}}
\newtheorem{theorem}{Theorem}[section]
\newtheorem{proposition}[theorem]{Proposition}
\newtheorem{definition}[theorem]{Definition}
\theoremstyle{remark}
\newtheorem{remark}[theorem]{Remark}
\title[Boolean cumulants and the free Lukacs property]{Conditional expectations\\ through Boolean cumulants and subordination  \\
 -	towards a better understanding of \\ the Lukacs property in free 
	probability}
\author[K. Szpojankowski]{Kamil Szpojankowski}
\author[J. Weso\l{}owski]{Jacek Weso\l{}owski}
\address{
Wydzia\l{} Matematyki i Nauk Informacyjnych\\
Politechnika Warszawska\\
ul. Koszykowa 75\\
00-662 Warszawa, Poland.}
\email{k.szpojankowski@mini.pw.edu.pl, j.wesolowski@mini.pw.edu.pl}
\thanks{Research partially supported by the NCN (National Science
	Center) grant 2016/21/B/ST1/00005}
\subjclass[2010]{Primary: 46L54. Secondary: 62E10.}
\keywords{
conditional moments, freeness, Boolean cumulants, free Poisson distribution, free Binomial distribution}
\begin{document}
\begin{abstract}
	 Following recently discovered connections between Boolean 
	 cumulants and freeness, we use them to derive explicit formulas for a 
	 family of conditional expectations in free variables. Further, we show 
	 how the approach through Boolean cumulants together with subordination 
	 simplifies some Lukacs--type  regression characterizations in free 
	 probability. Finally, we explain how the free dual Lukacs property can 
	 be used to get a pocket proof of the free version of the direct Lukacs 
	 property.
\end{abstract}
\maketitle
\section{Introduction}
	
	In this paper we study characterizations of probability measures in terms of free random variables. Problems of a similar type were studied for long time in classical probability. Among many examples the most prominent one is the Kac-Bernstein theorem which states that for independent random variables $X,Y$, the random vector $(U,V)=(X+Y,X-Y)$ has independent components if and only if $X$ and $Y$ have Gaussian distribution with the same variance. Another imortant example is the Lukacs theorem which states that for independent random variables $X,Y$,  the random vector $(U,V)=(X/(X+Y),X+Y)$ has independent components if and only if $X$ and $Y$ have Gamma distributions with the same scale parameter (cf. \cite{Lukacs}). 
	
	 Throughout recent years (mostly already in XXI century) it has been observed that many classical characterizations of probability measures have their counterparts in the framework of free probability. The analogue of the Kac-Bernstein theorem was studied by Nica (see \cite{NicaChar}) and states that for free $X,Y$, the random variables $U=X+Y$ and $V=X-Y$ are free if and only if $X,Y$ have Wigner semicircular distribution with the same variance. The free analogue of the Lukacs theorem was studied in \cite{SzpLukacsProp}. 
	
	It was also observed that the strong assumption that both vectors $(X,Y)$ and $(U,V)$ consist of independent (respectively free) random variables can be weakened and it is enough to assume only that some conditional moments of $U$ given $V$ are scalar multiples of the
	unit. This phenomenon was observed both in context of commutative, independent random variables (see \cite{LahaLukacs,BobWes2002Dual,WesolGamma}), as well as for non-commutative, free random variables (c.f. \cite{BoBr2006,SzpojanWesol}). On the other hand calculation of conditional moments of functions of non-commutative random variables typically is highly non-trivial. In \cite{EjsmontFranzSzpojankowski:2015:convolution} the subordination methodology of free convolutions was applied quite naturally to some characterization problems. Its main advantage is a considerable simplification of the proofs. In this research our goal originally was to apply the subordination methodology to characterizations through free dual Lukacs regressions in order to  simplify rather complex proofs from \cite{SzpDLRNeg}. The approach proved to be useful for one of the regression characterizations from \cite{SzpDLRNeg}. However, as powerful as it is, subordination does not cover the second of the dual Lukacs--type regression of negative order considered in \cite{SzpDLRNeg}. The problem lies in lack of an explicit expression for a rather complicated conditional expectation appearing in the regression condition. 	Recently discovered connections between Boolean cumulants \cite{FMNS2,LehnerSzpojan} and free probability allow to overcome that difficulty. In particular it was observed in these references that Boolean cumulants appear quite naturally in calculations of conditional expectations of some functions of free random variables. We will apply some of the results from  \cite{LehnerSzpojan} to derive an explicit expression for the conditional expectation we are interested here in. One of our main points is that results from the present paper and \cite{LehnerSzpojan} show that Boolean cumulants are natural tool to calculate explicitly conditional expectations in free probability.

	More precisely, we present a new approach to problems considered in \cite{SzpDLRNeg}, i.e. we work in dual scheme to the Lukacs theorem (observe that in the framework of the classical Lukacs theorem one has $X=UV$ and $Y=V(1-U)$), we assume that some conditional moments of $V^{1/2}(1-U)V^{1/2}$ given $V^{1/2}UV^{1/2}$ are scalar multiples of the unit, and conclude that $U$ has a free Binomial distribution and $V$ has a free Poisson distribution. As we mentioned above, it turns out, that the characterizations considered in \cite{SzpDLRNeg} are not a straightforward application of the subordination technique from \cite{EjsmontFranzSzpojankowski:2015:convolution}. 
	
	The main technical result of the present paper is an explicit formula for conditional expectation 
	\begin{align}\label{dualLukacsCondExp}
	\E_V\left((1-U)^{-1}U^{1/2}zU^{1/2}VU^{1/2} (1-zU^{1/2}VU^{1/2})^{-1} U^{1/2}(1-U)^{-1}\right),
	\end{align}
	where $\E_V$ denotes the conditional expectation on the algebra generated by V and random variables $U,V$ are free. Actually, we will be interested in a more general family of conditional expectations which, we believe, will be of  a wider interest, e.g. in other regression characterization problems in free probability.  Next we present easier proofs of main results of \cite{SzpDLRNeg}, for one of them we apply just the subordination technique alone and for the other the subordination is enhanced by the Boolean cumulants technique which enables to derive an explicit form of \eqref{dualLukacsCondExp}.
	
	We also present a surprisingly compact proof of the free Lukacs property for free Poisson distributed random variables. That is, we show that for free random variables $X$ and $Y$ both having free Poisson (Marchenko-Pastur) distribution, the random variables $U=X+Y$ and $V=(X+Y)^{-1/2}X(X+Y)^{-1/2}$ are free. In \cite{SzpLukacsProp} we presented a ''hands on'', direct combinatorial proof. There the strategy was to show that all mixed free cumulants of $U$ and $V$ vanish. In particular, the explicit formula for joint free cumulants of $X,X^{-1}$ (of an arbitrary order) for invertible, free Poisson distributed random variable $X$ was derived there. Here we show that the direct free Lukacs property actually follows from its dual version proved in \cite{SzpojanWesol}.
	
	Except from Introduction this paper has 4 more sections. In Section 2 we set up the framework and recall necessary notions and results. Section 3 is devoted to derivation (through Boolean cumulants)  of explicit expressions for a family of conditional expectations which contains the conditional expectation \eqref{dualLukacsCondExp}, as a prominent member. In Section 4 we present how to use subordination and Boolean cumulants (results of Sect. 3) to simplify proofs of free dual Lukacs regression characterizations from \cite{SzpDLRNeg}. Section 5 contains  a pocket proof of the free version of the direct 
	Lukacs property (based on the dual one).
\section{Notation and background}
	
	In this section we introduce notions and results from non-commutative probability. We restrict the background to essential facts which are necessary in the subsequent sections. Readers, who are not familiar with non-commutative (in particular, free) probability, to get a wider perspective, may choose to consult one of the books \cite{MingoSpeicher,NicaSpeicherLect}. 
	
	We assume that $\mathcal{A}$ is a unital $*$-algebra and $\varphi:\mathcal{A}\mapsto\mathbb{C}$ is a linear functional which is normalized (that is, $\varphi\left(1_{\mathcal{A}}\right)=1$, where $1_{\mathcal{A}}$ is a unit of $\mathcal{A}$), positive, tracial and faithful.  We will refer to the pair $(\mathcal{A},\varphi)$ as a non-commutative probability space.
	
	\subsection{Freeness, free and Boolean cumulants}
	
	The concept of freeness was introduced by Voiculescu in \cite{VoiculescuAdd} and among several existing notions of non-commutative independence is the most prominent one. Here we recall its definition.
	\begin{definition} Let  $(\mathcal{A},\varphi)$ be a non-commutative probability space. 
		We say that subalgebras $\left(\mathcal{A}_i\right)_{1\le i\le n}$ of algebra $\mathcal{A}$ are free if for any choice of $X_k\in \mathcal{A}_{i_k}$ which is centered, i.e. $\varphi\left(X_k\right)=0$, $k=1,\ldots,n$,
		\begin{align*}
		\varphi\left(X_1\cdots X_n\right)=0
		\end{align*}
		whenever  neighbouring random variables come from different subalgebras, that is when $i_k\neq i_{k+1}$ for all $k=1,\ldots,n$, where $i_{n+1}:=i_1$.
	\end{definition}

	It turns out that freeness has a nice combinatorial description which uses the lattice of non-crossing partitions. 
	\begin{definition} $\ $
		\begin{enumerate} 
		\item
		%By a partition of a finite totally ordered set $S$ we understand a family of subsets of $B_1,\ldots,B_k\subseteq S$ such that $\bigcup_{j=1}^k B_j=S$ where $B_j$ are non-empty and pairwise disjoint. 
		For a positive integer $n$ denote $[n]:=\{1,\ldots,n\}$. A partition $\pi$ of $[n]$ is a collection of non-empty, pair-wise disjont subsets  $B_1,\ldots,B_k\subseteq [n]$ such that $\bigcup_{j=1}^k B_j=[n]$. The subsets $B_j$ for $j=1,\ldots,k$ are called blocks of $\pi$, the number of blocks in $\pi$ is called the size of $\pi$ and is denoted by $|\pi|$, i.e. we have $|\pi|=k$.  
		
		\noindent
		The family of all partitions of  $[n]$ is denoted by  $\mathcal{P}(n)$. 	
		\item We say that $\pi\in \mathcal{P}(n)$ is a non-crossing partition if for any $B_1,B_2\in\pi$ and $1 \leq i_1<j_1<i_2<j_2\leq n$, $$\left(i_1,i_2\in B_1\quad\mbox{and}\quad j_1,j_2\in B_2\right)\quad \Rightarrow \quad B_1=B_2.$$ The family of all non-crossing partitions of $[n]$ is denoted by $NC(n)$.
		
		\item We say that $\pi\in \mathcal{P}(n)$ is an interval partition if for any $B_1,B_2\in\pi$ and $1 \leq i_1<j_1<i_2\leq n$   $$\left(i_1,i_2\in B_1\quad\mbox{and}\quad j_1\in B_2\right)\quad \Rightarrow \quad B_1=B_2.$$ The family of all interval partitions of $[n]$ is denoted by $Int(n)$.
	\end{enumerate}
	\end{definition}
	
It is useful to introduce a partial order $\leq$ on $NC(n)$ called the reversed refinement order.
	
	\begin{definition} $\ $
		For $\pi,\sigma\in \mathcal{P}(n)$ we say that that $\pi\leq \sigma$ if for any block $B\in\pi$ there exists a block $C\in\sigma$ such that $B\subseteq C$. The order $\leq$ is the reversed refinement order and it is also a partial order on the sets $NC(n)$ and $Int(n)$. 
		
		By $1_n$ we denote the maximal partition of $[n]$ with respect to $\leq$, i.e. the partition with one block equal to $[n]$.

%		On the set $NC(n)$ we will consider also another partial order denoted by $\ll$. For $\pi,\sigma\in NC(n)$ we say that $\pi\ll\sigma$ when $\pi\leq \sigma$ and for any block $C\in\sigma$ there is a block $B\in \pi$ such that $\min(C),\max(C)\in B$.
	\end{definition}
	
	It turns out that $(NC(n),\leq)$ and $(Int(n),\leq)$ have a lattice structure, for details we refer to \cite{NicaSpeicherLect}, Lectures 9 and 10.
	
%	We need an additional piece of structure on the lattice of non-crossing partitions, an anti-isomorphism called Kreweras complement introduced in \cite{Kreweras}. 
%	\begin{definition}
%		Given a noncrossing partition $\pi$ on $\{1,2,\dots,n\}$,
%		the Kreweras complement $K(\pi)$
%		is the $\le$-maximal 
%		noncrossing partition of the ordered set $\{\bar{1},\bar{2},\dots,\bar{n}\}$
%		such that $\pi\cup K(\pi)$ is a noncrossing partition on
%		$\{1,\bar{1},2,\bar{2},\dots,n,\bar{n}\}$.
%	\end{definition}
	
%	For a fixed non-commutative probability space $(\mathcal{A},\varphi)$ one introduces a multiplicative functional related to a partition in the following way
%	\begin{definition}
%		Let $\pi=\{B_1,\ldots,B_k\}\in\mathcal{P}(n)$. For $X_1,\ldots,X_n\in \mathcal{A}$ let
%		\begin{align*}
%		\varphi_\pi(X_1,\ldots,X_n)=\prod_{j=1}^k \varphi\left(X_{B_j}\right)
%		\end{align*}
%		where 
%		\begin{align*}
%		\varphi\left(X_{B_j}\right)=\varphi\left(\prod_{i\in B_j}X_i\right)
%		\end{align*}
%	\end{definition}
	Next we recall definitions of cumulant functionals related to non-crossing and interval partitions, called free and Boolean cumulants, respectively. Free cumulants,  introduced in \cite{SpeicherNC}, are important tools in free probability, while Boolean cumulants are related to the so called Boolean independence introduced in \cite{SpeicherWoroudi}.
	\begin{definition}
		For every $n\geq 1$ free cumulant functional $\kappa_n:\mathcal{A}^n\to \mathbb{C}$ is defined  recursively through equations
		\begin{align*}
		\forall m\geq 1\quad \forall (X_1,\ldots,X_m)\in \mathcal{A}^m\qquad\qquad \varphi(X_{1}\cdots X_{m})=\sum_{\pi\in NC(m)}\,\kappa_\pi(X_{1},\ldots,X_{m}),
		\end{align*}
		where for $\pi=\{B_1,\ldots,B_k\}\in NC(m)$
		\begin{align*}
		\kappa_\pi(X_1,\ldots,X_m)=\prod_{j=1}^k\,\kappa_{|B_j|}\left(X_i;i\in B_j\right).
		\end{align*}
		
		Similarly, for every $n\geq 1$  Boolean cumulant functional $\beta_n:\mathcal{A}^n\to \mathbb{C}$ is defined recursively through equations
		\begin{align*}
		\forall m\geq 1\quad \forall (X_1,\ldots,X_m)\in \mathcal{A}^m\qquad\qquad \varphi(X_1\cdots X_m)=\sum_{\pi\in Int(m)}\,\beta_\pi(X_1,\ldots,X_m),
		\end{align*}
		where for $\pi=\{B_1,\ldots,B_k\}\in Int(m)$ 
		\begin{align*}
			\beta_\pi(X_1,\ldots,X_m)=\prod_{j=1}^k \,\beta_{|B_j|}\left(X_i;i\in B_j\right).
			\end{align*}
	\end{definition}
	It turns out that freeness can be described in terms of free cumulants. More precisely random variables $X_1,\ldots,X_n$ are free if and only if for any $r\geq 2$ we have $\kappa_r\left(X_{i_1},\ldots,X_{i_r}\right)=0$ for any non-constant choice of $i_1,\ldots,i_r\in\{1,\ldots,n\}$.
	\begin{remark}
		In the sequel we will need the formula for Boolean cumulants with products as entries (see e.g. \cite{FMNS2})
		Fix two integers $m,n$ such that $0<m+1<n$ and numbers 
		$1\leq i_1<i_2<\ldots<i_{m+1}=n$. Denote by $\sigma$ the 
		interval partition 
		$\{\{1,\ldots,i_1\},\{i_1+1,\ldots,i_2\},\ldots,\{i_{m}+1,\ldots,i_{m+1}\}\}$. We have
		\begin{align}\label{BoolProd}
		\beta_{m+1}(X_1\cdots X_{i_1},\ldots,X_{i_{m}+1}\cdots X_{i_{m+1}})
		=\sum_{\substack{\pi \in Int(n)\\ \pi \vee \sigma=1_n}}\beta_\pi(X_1,\ldots,X_n),
		\end{align}
		where $\vee$ is join of partitions.
	\end{remark}
	In the present paper we will also need generating functions related with cumulants and moments:
	\begin{enumerate}
		\item Moment transform is defined for $z\in\mathbb{C}\setminus\mathbb{R}$ as
		\[M_X(z)=\int_{\mathbb{R}}\frac{zx}{1-zx}d\mu_X(x),\]
		if $X$ is bounded then in a neighbourhood od zero one has \[M_X(z)=z\varphi(X)+z^2\varphi(X^2)+\ldots\]
		\item $\eta$--transform is defined for $z\in\mathbb{C}\setminus\mathbb{R}$ as
		\[\eta_X(z)=\frac{M_X(z)}{1+M_X(z)}\]
		and in a neighbourhood of zero for bounded $X$ one has \[\eta_X(z)=\beta_1(X)z+\beta_2(X)z^2+\ldots\]
		\item For a positive $X$, for $z$ in a neighbourhood of $(-1,0)$ one can define so called $S$--transform
		\[S_X(z)=\tfrac{1+z}{z} M_X^{-1}(z)\] 
		which for free $X,Y$ has a remarkable property that  $S_{XY}=S_XS_Y$.
	\end{enumerate}

\subsection{Conditional expectation}
	Assume that $(\mathcal{A},\varphi)$ is a  $W^*$-probability spaces, that is
	$\mathcal{A}$ is a finite von Neumann algebra and $\varphi$ a faithful normal
	tracial state. Then for any von Neumann subalgebra $\mathcal{B}\subset
	\mathcal{A}$  there exists a faithful, normal projection
	$\E_\mathcal{B}:\mathcal{A}\to \mathcal{B}$ such that
	$\varphi\circ\E_\mathcal{B}=\varphi$.
	This projection is called the conditional expectation onto the subalgebra $\mathcal{B}$ with respect to $\varphi$. If $X\in \mathcal{A}$ is self-adjoint then $\E_\mathcal{B}(X)$ defines a unique self-adjoint element in $\mathcal{B}$. For $X\in\mathcal{A}$ by $\E_X$ we denote the conditional expectation given the von Neumann subalgebra generated by $X$ and $1_\mathcal{A}$. 
	
	An important tool for dealing with conditional expectations is the following equivalence 
	
	\begin{align}\label{eqn:equiv}
	\E_\mathcal{B}(Y)=Z\quad \Leftrightarrow  \quad Z\in\mathcal{B}\quad\mbox{and}\quad\varphi(YX)=\varphi(ZX)\quad\forall\,X\in\mathcal{B}.
	\end{align}
\subsection{Free Poisson and free Binomial distributions}
		In this subsection we recall definitions and some basic facts two about distributions: free Poisson and free binomial, which play important role in this paper. 
		
\begin{remark}[Free Poisson distribution]\label{rem:freePoisson} $\ $
		\begin{enumerate}
			\item The Marchenko--Pastur (or free Poisson) distribution $\mu=\mu(\alpha, \lambda)$ is defined by 
			\begin{align*}%\label{MPdist}
			\mu=\max\{0,\,1-\lambda\}\,\delta_0+\tilde{\mu},
			\end{align*}
			where $\alpha,\lambda> 0$ and the measure $\tilde{\mu}$, supported on the interval $(\alpha(1-\sqrt{\lambda})^2,\,\alpha(1+\sqrt{\lambda})^2)$, has the density (with respect to the Lebesgue measure)
			$$
			\tilde{\mu}(d x)=\frac{1}{2\pi\alpha x}\,\sqrt{4\lambda\alpha^2-(x-\alpha(1+\lambda))^2}\,d x. 
			$$
			\item For free Poisson distribution $\mu(\alpha,\lambda)$ the $S$-transform is of the form
			\begin{align*}
				S_{\mu(\alpha,\lambda)}(z)=\frac{1}{\alpha\lambda+\alpha z}
			\end{align*}
		\end{enumerate}
	\end{remark}

	\begin{remark}[Free binomial distribution] $\ $
		
		\begin{enumerate}
		\item Free binomial distribution $\nu=\nu(\sigma,\theta)$ is defined by
		\be\label{freebeta}
		\nu=(1-\sigma)\mathbb{I}_{0<\sigma<1}\,\delta_0+\tilde{\nu}+(1-\theta)\mathbb{I}_{0<\theta<1}\delta_1,
		\ee
		where $\tilde{\nu}$ is supported on the interval $(x_-,\,x_+)$,
		\begin{align}
		\label{binom_supp}
		x_{\pm}=\left(\sqrt{\frac{\sigma}{\sigma+\theta}\,\left(1-\frac{1}{\sigma+\theta}\right)}\,\pm\,\sqrt{\frac{1}{\sigma+\theta}\left(1-\frac{\sigma}{\sigma+\theta}\right)}\right)^2,
		\end{align}
		and has the density
		$$
		\tilde{\nu}(dx)=(\sigma+\theta)\,\frac{\sqrt{(x-x_-)\,(x_+-x)}}{2\pi x(1-x)}\,dx.
		$$
		where
		$(\sigma,\theta)\in \left\{(\sigma,\theta):\,\frac{\sigma+\theta}{\sigma+\theta-1}>0,\,\frac{\sigma\theta}{\sigma+\theta-1}>0\right\}$.
		The n-th free convolution power of distribution
		$$
		p\delta_0+(1-p)\delta_{1/n}
		$$
		is free-binomial distribution with parameters $\sigma=n(1-p)$ and $\theta=np$, which justifies the name of the distribution (see \cite{SaitohYosida}).

		\item For free Binomial distribution $\nu(\sigma,\theta)$ the $S$-transform is of the form
		\begin{align*}
		S_{\nu(\sigma,\theta)}(z)=1+\frac{\theta}{\sigma+z}.
		\end{align*}
		\end{enumerate}		
	\end{remark}
	
\section{Calculation of conditional expectation}

This section is devoted to derive explicit expressions for a family of conditional expectations of functions of free variables, as announced 
in the introduction. To attain this goal we use Boolean cumulants and some of the results from \cite{LehnerSzpojan,FMNS2}. Surprisingly, though the variables involved are free, calculations based on Boolean cumulants are much simpler than those based on free cumulants and shown
in \cite{SzpDLRNeg}. 

We start with recalling relevant facts about subordination, for details see \cite{BianeProceFreeIncr}.

Let $(\mathcal{A},\varphi)$ be as in Section 2.2. For a variable $X\in\mathcal{A}$ and $z\in \mathbb{C}\setminus\mathbb{R}$ we define a function $\Psi$ by $\Psi_{X}(z):=z X\left(1-z X\right)^{-1}$ and the following subordination formulas for conditional expectations with respect to positive $V$ and $U$, respectively, hold 
\begin{align}\E_V\,\Psi_{V^{1/2}UV^{1/2}}(z)=\Psi_V(\omega_1(z))\label{w1}\\
\E_U\,\Psi_{U^{1/2}VU^{1/2}}(z)=\Psi_U(\omega_2(z)).\label{w2}
\end{align}

Here, $\omega_1$ and $\omega_2$ are the subordination functions. Note that since we assume that $\varphi$ is tracial and both $U$ and $V$ are positive the moments of $UV,U^{1/2}VU^{1/2}$ and $V^{1/2}UV^{1/2}$ are the same, so $M_{UV}=M_{U^{1/2}VU^{1/2}}=M_{V^{1/2}UV^{1/2}}.$ Since by the very definition we have $M_X(z)=\varphi(\Psi_X(z))$  identities \eqref{w1} and \eqref{w2} imply
	\begin{equation}
	\label{w12}
	M_{UV}(z)=M_V(\omega_1(z))=M_U(\omega_2(z)).
	\end{equation}
For future use we will also denote $\Psi_X:=\Psi_X(1)$. In the sequel, we will also use the symbol $\psi$ for a formal power series $\psi(x)=\sum_{k\ge 1}\,x^k$, where $x$ is from some (unspecified) algebra over a real vector space.  

We will also need the following two formulas involving  Boolean cumulants. The first is taken from \cite{LehnerSzpojan} and the second is a simple consequence of Theorem 1.2 from \cite{FMNS2}; it is also closely related to the characterization of freeness from \cite{JekelLiu:2019:operad}.
\begin{proposition}
	Let $\{X_1,\ldots,X_{n+1}\}$ and $\{Y_1,\ldots,Y_n\}$ be free, $n\ge 1$. Then 
	\begin{align}
	\label{bocu1}
	&\varphi(X_1Y_1\ldots X_nY_n)\\ \nonumber &=\sum_{k=0}^{n-1}\,\sum_{0=j_0<j_1<\ldots<j_{k+1}=n}\,\varphi(Y_{j_1}\ldots Y_{j_{k+1}})\,\prod_{\ell=0}^k\,\beta_{2(j_{\ell+1}-j_{\ell})-1}(X_{j_{\ell}+1},Y_{j_{\ell}+1}\ldots,Y_{j_{\ell+1}-1},X_{j_{\ell+1}})
	\end{align}
	and
	\begin{align}
	\label{bocu2}
	&\beta_{2n+1}(X_1,Y_1,\ldots,X_n,Y_n,X_{n+1})\\ \nonumber &=\sum_{k=2}^{n+1}\,\sum_{1=j_1<\ldots<j_k=n+1}\,\beta_k(X_{j_1},\ldots,X_{j_k})\,\prod_{\ell=1}^{k-1}\,\beta_{2(j_{\ell+1}-j_{\ell})-1}(Y_{j_l},X_{j_{\ell}+1},\ldots,X_{j_{\ell+1}-1},Y_{j_{\ell+1}-1}).
	\end{align}
\end{proposition}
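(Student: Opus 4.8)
The plan is to establish the two identities separately, since each is a repackaging of a Boolean-cumulant computation combined with the freeness of the two families. Both sides are organized by the same principle: freeness of $\{X_i\}$ from $\{Y_j\}$ forces the combinatorics of an alternating word to factor at the points where one color detaches from the other, and Boolean cumulants are precisely the objects that record these detachment patterns. Accordingly, I would reduce each statement to a reorganization of a sum over interval (resp. non-crossing) partitions, keeping the two cited results as the structural engines.

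For \eqref{bocu1} I would rely on \cite{LehnerSzpojan}, where this is the basic conditional-expectation formula; to reconstruct it I would start from the moment--Boolean-cumulant expansion $\varphi(X_1Y_1\cdots X_nY_n)=\sum_{\tau\in Int(2n)}\beta_\tau(X_1,Y_1,\ldots,X_n,Y_n)$ and then resolve each interval block using freeness. The bookkeeping device is to record which of the $Y$'s remain linked after taking the conditional expectation $\E_Y$ onto the algebra generated by the $Y$'s: these are the marked indices $j_1<\cdots<j_{k+1}=n$, they assemble into the single factor $\varphi(Y_{j_1}\cdots Y_{j_{k+1}})$, while the sub-words strictly between consecutive marks, which begin and end with an $X$, produce the Boolean cumulants $\beta_{2(j_{\ell+1}-j_\ell)-1}(X_{j_\ell+1},\ldots,X_{j_{\ell+1}})$. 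Equivalently, one writes $\varphi(X_1Y_1\cdots X_nY_n)=\varphi(\E_Y(X_1Y_1\cdots X_nY_n))$ and expands $\E_Y$ through Boolean cumulants, which is exactly the content of \cite{LehnerSzpojan}.

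For \eqref{bocu2} I would do genuine work, deriving it from Theorem 1.2 of \cite{FMNS2}. The mechanism is dual to the first identity: now the full word begins and ends with an $X$, and the Boolean cumulant $\beta_{2n+1}$ decomposes according to how the $X$-entries cluster. Freeness makes the selected cluster representatives $1=j_1<\cdots<j_k=n+1$ contribute a single Boolean cumulant $\beta_k(X_{j_1},\ldots,X_{j_k})$, while each gap between consecutive representatives contributes the $Y$-bordered Boolean cumulant $\beta_{2(j_{\ell+1}-j_\ell)-1}(Y_{j_\ell},X_{j_\ell+1},\ldots,Y_{j_{\ell+1}-1})$. I would first check the base case $n=1$, where the identity reduces to $\beta_3(X_1,Y_1,X_2)=\beta_2(X_1,X_2)\beta_1(Y_1)$, in order to fix the indexing conventions, and then match the general formula against the specialization of \cite{FMNS2}, Theorem 1.2, to the alternating word, using the product formula \eqref{BoolProd} to handle the blocks that straddle several letters.

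The step I expect to be the main obstacle is the combinatorial reorganization common to both identities: one must show that the sum over interval (resp. non-crossing) partitions that survive freeness collapses \emph{exactly} onto the displayed sum over marked-index sets, with the correct Boolean-cumulant weight on each block and no residual terms. Concretely, the delicate point is to verify that every sub-word containing both colors enters only through the prescribed $A$-type (resp. $Y$-bordered) Boolean cumulant, which amounts to a M\"obius/zeta computation on the lattices $Int(2n)$ and $NC$. Having confirmed the indexing on the small cases $n=1,2$, I would then carry out this reorganization in general, which is where the freeness characterization of \cite{JekelLiu:2019:operad} becomes the cleanest way to control which partition blocks are admissible.
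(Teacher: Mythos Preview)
Your proposal is aligned with the paper's treatment: the paper does not actually prove this proposition but simply imports \eqref{bocu1} from \cite{LehnerSzpojan} and states that \eqref{bocu2} is a simple consequence of Theorem~1.2 in \cite{FMNS2} (with a nod to \cite{JekelLiu:2019:operad}), which is precisely the pair of sources you invoke and the derivation you sketch. Your outline in fact supplies more detail than the paper gives, and the base case $\beta_3(X_1,Y_1,X_2)=\beta_2(X_1,X_2)\beta_1(Y_1)$ you propose to check is correct and does fix the indexing conventions as intended.
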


We note also a simple consequence of \eqref{bocu1} together with its reformulation.

\begin{remark}\label{rem:21} $\ $
\begin{enumerate}
	\item  
	In some neighbourhood of zero for subordination functions defined by equations \eqref{w1} and \eqref{w2} we have  (see \cite{LehnerSzpojan})
	\begin{align*}
	\omega_1(z)=\sum_{k=1}^\infty\beta_{2k-1}(U,V,U,\ldots,V,U)z^k,
	\end{align*}
	\begin{align*}
	\omega_2(z)=\sum_{k=1}^\infty\beta_{2k-1}(V,U,V\ldots,U,V)z^k.
	\end{align*}

	\item Equation \eqref{bocu1} can be reformulated in a, seemingly less transparent, but quite useful way, where we set $i_0:=0$ and  $i_k$ records the distance between $k$-th and $(k+1)$-st of $Y$'s which were picked to the outer block together with $Y_n$,
	\begin{align} \label{eq:mom-boolcum}
		\varphi\left(X_1Y_1\ldots X_nY_n\right)=
		&\sum_{k=1}^{n}\sum_{\substack{ i_1
			+\ldots+ i_k=n-k\\ i_1,\ldots,i_k\geq 0}}
		\varphi\left(Y_{n-i_0-i_1-\ldots-i_{k-1}-(k-1)}\ldots Y_{n-i_0-i_1-1}Y_n\right)\\ \nonumber
		&\prod_{j=1}^{k}\beta_{2i_j+1}\left(X_{n-i_0-i_1-\ldots-i_j-(j-1)},Y_{n-i_0-i_1-\ldots-i_j-(j-1)},\ldots,X_{n-i_0-i_1-\ldots-i_{j-1}-(j-1)}\right),
	\end{align}
	with $i_0=0$.
\end{enumerate}
\end{remark}

The remaining part of this section is devoted to calculation of conditional expectation \eqref{dualLukacsCondExp}. Actually, we will calculate a more general conditional expectation
\begin{align}\label{fgce}
\E_V\,f(U)U^{-1/2}\Psi_{U^{1/2}VU^{1/2}}(z)U^{-1/2}g(U),
\end{align}
where $f, g$ are functions such that $f(U)$ and $g(U)$ bounded. Note that the conditional expectation \eqref{fgce} for $f=g=\psi$ reduces to \eqref{dualLukacsCondExp}.

Below, in Prop. \ref{prop:fgce} and Prop. \ref{TT}, we present main technical results of this paper, i.e. we derive an explicit form of the conditional expectation \eqref{fgce}. It is worth to point out that this explicit form is written quite naturally just in terms of moments and Boolean cumulants and with no reference to free cumulants. This fact strengthens considerably the methodological recommendation from \cite{LehnerSzpojan}:  to calculate conditional moments of functions of free random variables  use Boolean cumulants!
\begin{proposition}\label{prop:fgce}
	Let $(\mathcal{A},\varphi)$ be as in Section 2.2. Assume that $U,V\in\mathcal{A}$ are free, $0\leq U<1$ and $V$ is bounded. Let $f$ and $g$ be such that $f(U)$ and $g(U)$ are bounded. Then for $z$ in some neighbourhood of $0$ and $\omega_1,\omega_2$  satisfying \eqref{w1} and \eqref{w2}
	\begin{align}\label{ev}
	&\E_V\,f(U)U^{-1/2}\Psi_{U^{1/2}VU^{1/2}}(z) U^{-1/2}g(U) \\
	&\hspace{40mm}=\omega_2(z)\eta_U^{f,g}(\omega_2(z))+z \eta_U^f(\omega_2(z))\eta_U^g(\omega_2(z))V\left(1+\Psi_V(\omega_1(z)\right),\nonumber
	\end{align}
	where
	\begin{align}
	\eta^{f,g}_U(z)&=\sum_{\ell=0}^{\infty}\,\beta_{l+2}(f(U),U,\ldots,U,g(U))z^{\ell},\label{etaufg}\\
	\eta^f_U(z)&=\sum_{\ell=0}^{\infty}\,\beta_{l+1}(f(U),U,\ldots,U)z^{\ell}.\label{etauf}
	\end{align}
\end{proposition}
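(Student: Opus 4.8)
The plan is to reduce \eqref{ev} to an identity of power series in $z$ (convergent near $0$, since $V$, $f(U)$, $g(U)$ are bounded and $0\le U<1$) whose coefficients are Boolean cumulants, and then to resum those coefficients into $\omega_1,\omega_2$. First I would expand $\Psi_{U^{1/2}VU^{1/2}}(z)=\sum_{k\ge1}z^k(U^{1/2}VU^{1/2})^k$; since $(U^{1/2}VU^{1/2})^k=U^{1/2}(VU)^{k-1}VU^{1/2}$, the flanking $U^{\pm1/2}$ cancel against $f(U)U^{-1/2}$ and $U^{-1/2}g(U)$, so the left-hand side of \eqref{ev} becomes $\sum_{k\ge1}z^k\,f(U)(VU)^{k-1}V\,g(U)$. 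It then suffices to compute $\E_V$ of the single word $W_k:=f(U)(VU)^{k-1}V\,g(U)$ for each $k$.

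Next I would determine $\E_V(W_k)$ via \eqref{eqn:equiv}: for a test element $h(V)$, traciality turns $W_k\,h(V)$ into the alternating word $X_1Y_1\cdots X_{k+1}Y_{k+1}$ with $X_1=f(U)$, $X_2=\cdots=X_k=U$, $X_{k+1}=g(U)$ and $Y_1=\cdots=Y_k=V$, $Y_{k+1}=h(V)$. Applying \eqref{bocu1} to the free families $\{X_i\}$ and $\{Y_i\}$, the outer index $j_{p+1}=k+1$ is forced, so $h(V)$ always lies in the outer moment, which is thus $\varphi(V^p\,h(V))$. Matching against \eqref{eqn:equiv} gives $\E_V(W_k)=\sum_{p=0}^{k}c_p^{(k)}V^p$, where $c_p^{(k)}$ is the scalar sum, over outer interval partitions into $p+1$ blocks, of products of Boolean cumulants whose entries alternate between functions of $U$ and the interspersed internal copies of $V$.

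The heart of the argument is to resum $\sum_k z^k c_p^{(k)}$ for fixed $p$. Each outer block is a Boolean cumulant with $U$-type entries and internal $V$'s, which I would expand by \eqref{bocu2} into a pure-$U$ cumulant times a product of $V$-bordered cumulants. The pure-$U$ cumulants have mutually commuting arguments, hence are invariant under reordering; this identifies the last block's $\beta(U,\dots,U,g(U))$ with the $\beta(g(U),U,\dots,U)$ of the $g$-analogue of \eqref{etauf}. By Remark \ref{rem:21} the $V$-bordered cumulants resum to $\omega_2(z)$, whence the first block resums to $\eta^f_U(\omega_2)$, the last to $\eta^g_U(\omega_2)$, each of the $p-1$ interior blocks to $\eta_U(\omega_2)/\omega_2$ (with $\eta_U$ the $\eta$-transform of $U$), and the lone block of the $p=0$ term to $\omega_2\,\eta^{f,g}_U(\omega_2)$, which is exactly the $V^0$-coefficient in \eqref{ev}. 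Since the power of $z$ equals the total number of copies of $V$, the $p$ external copies contribute precisely $z^p$. Using the subordination identity $\eta_U(\omega_2(z))=\omega_1(z)\omega_2(z)/z$—obtained by feeding \eqref{bocu2} into the series for $\omega_1$ in Remark \ref{rem:21}—the interior factor becomes $(\omega_1/z)^{p-1}$, so the $V^p$-coefficient collapses to $z\,\eta^f_U(\omega_2)\eta^g_U(\omega_2)\,\omega_1^{p-1}$; summing $\sum_{p\ge1}\omega_1^{p-1}V^p=V(1-\omega_1V)^{-1}=V\bigl(1+\Psi_V(\omega_1)\bigr)$ yields \eqref{ev}.

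The main obstacle is the double bookkeeping in this resummation: one must check that after the two-step expansion \eqref{bocu1}--\eqref{bocu2} the nested sums factor cleanly across the blocks, and that the single variable $z$ distributes correctly, its powers being absorbed into the $\omega_2$'s through the internal copies of $V$ while leaving exactly one $z$ per external copy. Getting this distribution right—so that the interior blocks collapse to powers of $\omega_1$ and the prefactor is the single $z$ rather than $z^p$—is the only genuinely delicate point; the rest is routine generating-function algebra justified for $z$ in a neighbourhood of $0$.
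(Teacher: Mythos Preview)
Your argument is essentially the paper's own proof: expand $\Psi$, test against $h(V)$ via \eqref{eqn:equiv} and traciality, apply \eqref{bocu1} to the resulting alternating word, read off the $V^p$-coefficients of the conditional expectation, and then use \eqref{bocu2} together with Remark~\ref{rem:21} to resum the block contributions into the $\omega_i$'s. The only structural difference is that you push the interior blocks through \eqref{bocu2} as well and then invoke the identity $\eta_U(\omega_2)=\omega_1\omega_2/z$, whereas the paper recognises directly from Remark~\ref{rem:21}(1) that $\sum_{i\ge0}\beta_{2i+1}(U,V,\dots,V,U)z^{i}=\omega_1(z)/z$, avoiding that detour.

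One point of justification needs fixing. You write that ``the pure-$U$ cumulants have mutually commuting arguments, hence are invariant under reordering.'' This is false: Boolean cumulants are \emph{not} symmetric in their arguments even when those arguments commute (already $\beta_3(X,Y,Z)=\varphi(XYZ)-\varphi(X)\varphi(YZ)-\varphi(XY)\varphi(Z)+\varphi(X)\varphi(Y)\varphi(Z)$ is not symmetric under $X\leftrightarrow Y$). What you actually need---and what the paper uses---is invariance under \emph{reflection}, $\beta_{l+1}(U,\dots,U,g(U))=\beta_{l+1}(g(U),U,\dots,U)$, which holds because $\varphi$ is tracial and interval partitions are stable under reversal of $[n]$. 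With that correction your outline goes through, and the bookkeeping you flag as the delicate point is exactly the change-of-summation computation the paper carries out in the displayed lines following \eqref{rhs}.
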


\begin{proof}
	We will calculate the conditional expectation \eqref{fgce} using Remark \ref{rem:21}.
	
	For $z$ sufficiently small we can write
	$\Psi_{U^{1/2}VU^{1/2}}(z)=\sum_{n=1}^{\infty}z^{n}U^{1/2}(V(UV)^{n-1})U^{1/2}$. It suffices to calculate
	\[\sum_{n=1}^{\infty}z^n\varphi\left(f(U)V (UV)^{n-1} g(U)H\right),\]
	for any $H$ in the von Neumann algebra generated by $\{1_{\mathcal{A}},V\}$. Thus we reduce the problem to calculation of moments
	\[\varphi\left(f(U)V (UV)^{n-1} g(U)\,H\right)\quad n\ge 1.\]
	We use \eqref{eq:mom-boolcum} to express the above moment in terms of moments of $H$ and $V$ and Boolean cumulants of $f(U),g(U),U$ and $V$. The two free families in \eqref{eq:mom-boolcum} are $\{\underbrace{f(U),U,\ldots,U,g(U)}_{n+1}\}$ and $\{\underbrace{V,\ldots,V,H}_{n+1}\}$. Denoting $i_{\ell}=j_{\ell+1}-j_{\ell}$ in  \eqref{eq:mom-boolcum} we get
	%		\small
	\begin{align*}
	&\varphi\left(f(U)V (UV)^{n-1}g(U)H\right)=
	\varphi(H)\beta_{2n+1}\left(f(U),V,U,\ldots,U,V,g(U)\right)\\
	&+\sum_{k=1}^{n}\varphi\left(V^{k}H\right)\,\sum_{ i_1
		+\ldots+i_{k+1}=n-k}\beta_{2i_1+1}(f(U),V,\ldots,V,U) \cdots\beta_{2i_{k+1}+1}(U,V,\ldots,U,V,g(U)),
	\end{align*}
	%		\normalsize
	where the Boolean cumulants which are hidden under $\cdots$ in the formula above are of the form $\beta_{2k+1}(U,V,\ldots,U,V,U)$.
	
	Thus, taking into account \eqref{eqn:equiv}, the conditional expectation assumes the form \small
	\begin{align}
	&\E_V\,f(U)U^{-1/2}\Psi_{U^{1/2}VU^{1/2}}(z) U^{-1/2}g(U)
	=\sum_{n=1}^{\infty}z^n\,\beta_{2n+1}\left(f(U),V,U,\ldots,U,V,g(U)\right)\label{rhs}\\&
	+\sum_{n=1}^{\infty}z^n\,\left(\sum_{k=1}^{n}V^{k}\sum_{ i_1
		+\ldots+i_{k+1}=n-k}\beta_{2i_1+1}(f(U),V,\ldots,V,U)\cdots\beta_{2i_{k+1}+1}(U,V,\ldots,U,V,g(U))\right).\nonumber
	\end{align}\normalsize
	Let us denote
	\begin{align*}
	A_1(z)&=\sum_{n=0}^{\infty}\beta_{2n+1}\left(f(U),V,U,\ldots,U,V,U\right)z^{n},\quad
	A_2(z)=\sum_{n=0}^{\infty}\beta_{2n+1}\left(U,V,U,\ldots,U,V,g(U)\right)z^{n},\\
	B(z)&=\sum_{n=1}^{\infty}\beta_{2n+1}\left(f(U),V,U,\ldots,U,V,g(U)\right)z^{n},\\
	C_1(z)&=\sum_{n=0}^{\infty}\beta_{2n+1}\left(U,V,U,\ldots,U,V,U\right)z^{n},\quad 
	C_2(z)=\sum_{n=0}^{\infty}\beta_{2n+1}\left(V,U,V\ldots,V,U,V\right)z^{n}.
	\end{align*}
	
	Observe that the change of order of summation in the second summand at the right hand side of \eqref{rhs} gives \small
	\begin{align*}
	& \sum_{n=1}^{\infty}z^n\sum_{k=1}^{n}V^{k}\sum_{ i_1
		+\ldots+i_{k+1}=n-k}\beta_{2i_1+1}(f(U),V,\ldots,V,U) \cdots\beta_{2i_{k+1}+1}(U,V,\ldots,U,V,g(U))\\
	&  =z V \sum_{n=1}^{\infty}\sum_{k=1}^{n}V^{k-1}z^{k-1} \sum_{ i_1
		+\ldots+i_{k+1}=n-k}\beta_{2i_1+1}(f(U),V,\ldots,V,U)z^{i_1}\cdots\beta_{2i_{k+1}+1}(U,V,\ldots,U,V,g(U))z^{i_{k+1}}\\
	& =zA_1(z)A_2(z)V\sum_{k=1}^{\infty}\,[zC_1(z)V]^{k-1}=zA_1(z)A_2(z)V\left(1+\Psi_V(z C_1(z))\right).
	\end{align*}\normalsize
	Remark \ref{rem:21} implies that $zC_1(z)$ is exactly the subordination function $\omega_1(z)$.
	
	Thus,  returning to \eqref{rhs} and the definition of $B$ we finally get
	\begin{align*}
	\E_V\,f(U)U^{-1/2}\Psi_{U^{1/2}VU^{1/2}}(z) U^{-1/2}g(U)=B(z)+z A_1(z)A_2(z)V(1+\Psi_V(\omega_1(z)).
	\end{align*}		

	We will use below \eqref{bocu2} for free collections $\{\underbrace{f(U),U,\ldots,U}_{i+1}\}$ and $\{\underbrace{V,\ldots,V}_i\}$ for any $i\ge 0$ (with indices changed in the way we did when using \eqref{eq:mom-boolcum} earlier in this proof)	
	\begin{align*}
	&A_1(z)=\sum_{i=0}^{\infty}z^i\beta_{2i+1}(f(U),V,\ldots,V,U)\\
	&=\sum_{i=0}^\infty\sum_{l=0}^i\beta_{l+1}(f(U),U,\ldots,U)z^{l}\sum_{i_1+\ldots+i_l=i-l}\beta_{2i_1+1}(V,U,\ldots,U,V)z^{i_1}\ldots\beta_{2i_l+1}(V,U,\ldots,U,V)z^{i_l}\\
	&=\sum_{l=0}^{\infty}\beta_{l+1}(f(U),U,\ldots,U)z^{l}C_2^l(z)
	\end{align*}
	where $zC_2(z)=\omega_2(z)$. Consequently, $A_1(z)=\eta^f_U(\omega_2(z))$ - see \eqref{etauf}. 
	
	Boolean cumulants are invariant under reflection $$\beta_{2i+1}(U,V,\ldots,U,V,g(U))=\beta_{2i+1}(g(U),V,U,\ldots,V,U)$$ and thus, by the same calculation as for $A_1$, we get $A_2(z)=\eta^g_U(\omega_2(z))$.

	To find $B$ we repeat (with obvious modifications) the first part of the calculation which has been done above for $A_1$. It gives the representation
	\begin{align*}
	B(z)=\sum_{l=0}^{\infty}\beta_{l+2}(f(U),U,\ldots,U,g(U))z^{l+1}C_2^{l+1}(z).
	\end{align*}
	Consequently, 	$B(z)=\omega_2(z)\eta_U^{f,g}(\omega_2(z))$ - see \eqref{etaufg}.

\end{proof}

It turns out that both $\eta^{f,g}_U$ and $\eta_U^f$ can be conveniently expressed in terms of an operation $\varphi_D^U$ which we are going to define now.
 
Let $H$ be  a formal power series $H(T)=\sum_{k\ge 0} h_k T^k$ where $T$ is a variable from some algebra. Let ${\bf D}$ denote the zero derivative i.e. a linear operator which is defined on a  power series in $T\in \mathcal{A}$ through its action on monomials: ${\bf D}\,T^k=T^{k-1}$ for $k\ge 1$ and ${\bf D}\,T^k=0$ for $k=0$. Similarly by $D$ we denote the zero derivative acting  on complex  power series.
  
For a power series $f:\mathcal{A}\to\mathcal{A}$ and $T\in\mathcal{A}$ we define a new operator $\varphi_D^T(H,f)$  (acting on complex functions)
\begin{equation}\label{varp}
\varphi_D^T(H,f):=\sum_{k\ge 0}\,h_k\varphi\left({\bf D}^kf(T)\right)D^k.
\end{equation}
Note that $\varphi_D^T(H,f)$ is a formal series  of weighted zero derivatives of increasing orders (with the weight $h_k\varphi\left({\bf D}^kf(T)\right)$ for the derivative of order $k$, $k\ge 0$). The result of its application to an analytic function in general is a formal series (which in some cases may converge).  Note that for integer $r\ge 0$
$$
\varphi_D^T(H,\,T^r)=\sum_{k=0}^r\,h_k\varphi(T^{r-k})\,D^k,
$$
and, in particular, $$\varphi_D^T(H,\,1)=h_0\qquad\mbox{and}\qquad\varphi_D^T(H,\,T)=h_0\varphi(T)\mathrm{id}+h_1D.$$
Therefore, e.g. for $H:=\psi$ we have
\begin{equation}
\label{DDD}
\varphi_D^T(\psi,\,1)=0\qquad \mbox{and}\qquad \varphi_D^T(\psi,\,T)=D.
\end{equation}

On the other hand, since for $\psi(z)=z(1-z)^{-1}$, $|z|<1$, we have $D^k\psi=1+\psi$, it follows that
$$
\varphi_D^T(H,\psi)=\varphi(1+\Psi_T)H(D).
$$
In particular,
\begin{equation}\label{psipsi}
\varphi_D^T(\psi,\psi)=\varphi(1+\Psi_T)\psi(D).
\end{equation}

Now we are ready to give explicit formulas for $\eta_U^{f,g}$ and $\eta_U^f$ defined in \eqref{etaufg} and \eqref{etauf}, respectively.
\begin{proposition}\label{TT}
	Assume that $f$ and $g$ are  analytic functions on the unit disc. Let $U$ be a non-commutative variable  such that $0\leq U<1$ with the $\eta$--transform $\eta_U$. 	
	Then  
	\begin{align}\label{etafg}
	\eta^{f,g}_U=\left[\varphi_D^U(\psi,f)\circ\varphi_D^U(\psi,g)\right]\,\eta_U,	
	\end{align}
	and 
	\begin{align}\label{etaf}
	\eta_U^f(z)=z\,([\varphi_D^U(\psi,\,f)\, \circ D]\eta_U)(z)+(\varphi_D^U(\psi,\,f)\,\eta_U)(0).	
	\end{align}
\end{proposition}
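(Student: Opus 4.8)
The plan is to reduce both identities to the monomial case by multilinearity of Boolean cumulants, expand the resulting cumulants with power entries via the product formula \eqref{BoolProd}, and then match everything at the level of generating functions against the definition \eqref{varp} of $\varphi_D^U(\psi,\cdot)$. Throughout I write $\beta_n:=\beta_n(U,\ldots,U)$, so that $\eta_U(z)=\sum_{n\ge1}\beta_n z^n$, and I expand $f(U)=\sum_{a\ge0}f_aU^a$, $g(U)=\sum_{b\ge0}g_bU^b$.

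The combinatorial core is the identity
\begin{align}
\beta_{\ell+2}(U^{a},\underbrace{U,\ldots,U}_{\ell},U^{b})
=\sum_{j=1}^{a}\sum_{k=1}^{b}\varphi(U^{a-j})\,\varphi(U^{b-k})\,\beta_{\ell+j+k},
\end{align}
valid for $a,b\ge1$, $\ell\ge0$, together with its one-sided version $\beta_{\ell+1}(U^{a},U,\ldots,U)=\sum_{j=1}^{a}\varphi(U^{a-j})\beta_{\ell+j}$. To obtain it I apply \eqref{BoolProd} with $n=a+\ell+b$ and $\sigma$ the interval partition with blocks $\{1,\ldots,a\}$, the singletons $\{a+1\},\ldots,\{a+\ell\}$ and $\{a+\ell+1,\ldots,n\}$. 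Since the join of two interval partitions keeps exactly their common break points, and $\sigma$ breaks only at the $\ell+1$ gaps bounding the middle singletons, the constraint $\pi\vee\sigma=1_n$ forces $\pi$ to have no break at any of those gaps; hence the block of $\pi$ meeting the singletons is a single interval $[s,t]$ with $s\le a$ and $t\ge a+\ell+1$. The end segments $\{1,\ldots,s-1\}$ and $\{t+1,\ldots,n\}$ are then cut by $\pi$ into arbitrary interval partitions, and summing $\beta_\pi$ over them collapses, by the defining moment--Boolean relation, to the moment factors $\varphi(U^{s-1})$ and $\varphi(U^{n-t})$. Substituting $j=a-s+1$ and $k=t-a-\ell$ gives the displayed formula.

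With this in hand I pass to generating functions. By multilinearity the coefficient of $z^{\ell}$ in $\eta^{f,g}_U$ (see \eqref{etaufg}) equals $\sum_{a,b}f_ag_b\,\beta_{\ell+2}(U^a,U,\ldots,U,U^b)$, and the terms with $a=0$ or $b=0$ vanish because a Boolean cumulant of order $\ge2$ dies as soon as one argument is the unit. Inserting the core identity and interchanging the summations, this coefficient becomes $\sum_{j,k\ge1}c_jd_k\,\beta_{\ell+j+k}$, where $c_j=\sum_{a\ge j}f_a\varphi(U^{a-j})=\varphi(\mathbf{D}^jf(U))$ and $d_k=\varphi(\mathbf{D}^kg(U))$. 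On the other side, $\varphi_D^U(\psi,f)=\sum_{j\ge1}c_jD^j$ and similarly for $g$ (the $j=0$ term is annihilated because $\psi$ has no constant coefficient), so composing and applying to $\eta_U$ with $D^mz^n=z^{n-m}$ reproduces exactly $\sum_{\ell\ge0}\bigl(\sum_{j,k\ge1}c_jd_k\beta_{\ell+j+k}\bigr)z^{\ell}$. This establishes \eqref{etafg}.

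Formula \eqref{etaf} follows by the same computation from the one-sided identity, the only delicate point being the order-one term $\beta_1(f(U))=\varphi(f(U))$, i.e. the $z^0$ coefficient, where the unit-vanishing argument no longer helps. This is exactly why \eqref{etaf} is not a plain operator image of $\eta_U$ but carries the prefactor $z$ and the separate additive constant $(\varphi_D^U(\psi,f)\eta_U)(0)$: for $\ell\ge1$ the coefficient matches $z\,[\varphi_D^U(\psi,f)\circ D]\eta_U$ term by term as before, while the constant term is handled through the moment recursion $\varphi(U^a)=\sum_{k=1}^{a}\beta_k\varphi(U^{a-k})$, which turns $\sum_{k\ge1}c_k\beta_k$ into $\sum_{a\ge1}f_a\varphi(U^a)$ and so reconciles the two sides (in the application $f=g=\psi$ one has $f(0)=g(0)=0$, so no correction survives). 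I expect the main obstacle to be precisely this combinatorial step: cleanly characterizing the interval partitions with $\pi\vee\sigma=1_n$ and verifying that the two end segments collapse to the moment factors without double counting. Once that identity is secured, the rest is bookkeeping with the zero-derivative $D$ and the index shifts, which the auxiliary evaluations \eqref{DDD} and \eqref{psipsi} make routine.
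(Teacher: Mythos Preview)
Your proof is correct and follows essentially the same route as the paper's own argument: expand $f$ and $g$ in power series, use the product formula \eqref{BoolProd} to reduce $\beta_{\ell+2}(U^a,U,\ldots,U,U^b)$ to a double sum $\sum_{j,k}\varphi(U^{a-j})\varphi(U^{b-k})\beta_{\ell+j+k}$, and then recognise the resulting coefficients $c_j=\varphi(\mathbf D^jf(U))$, $d_k=\varphi(\mathbf D^kg(U))$ as exactly those appearing in $\varphi_D^U(\psi,f)\circ\varphi_D^U(\psi,g)$. The only cosmetic differences are that the paper obtains the two--sided expansion by applying the one--sided identity \eqref{eq:booleanPowers} twice rather than by your direct join analysis of $\pi\vee\sigma=1_n$, and that for \eqref{etaf} the paper passes through the relation $\eta_U^f(z)=z\,\eta_U^{f,\mathrm{id}}(z)+\beta_1(f(U))$ together with $\varphi_D^U(\psi,\mathrm{id})=D$, whereas you argue the coefficients of $z^\ell$ for $\ell\ge1$ and $\ell=0$ separately; both lead to the same computations.

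One small remark on \eqref{etaf}: your parenthetical about $f(0)=0$ is exactly right, and the paper's own proof also restricts to $f(z)=\sum_{i\ge1}a_iz^i$ at that point. For $f$ with nonzero constant term the two sides of \eqref{etaf} differ by $f(0)$ (take $f\equiv1$: the left side is $1$, the right side is $0$ by \eqref{DDD}), so this is a harmless imprecision in the statement rather than a gap in either argument.
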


\begin{proof} First, we consider $\eta^{f,g}_U$.
	
	Let $f(z)=\sum_{k\ge 0}\,a_kz^k$ and $g(z)=\sum_{k\ge 0}\,b_kz^k$. Expanding $f$ and $g$ we get 
	\begin{align*}
	\eta^{f,g}_U(z)&=\sum_{\ell=0}^{\infty}\,z^{\ell}\sum_{i,j\ge 0}a_ib_j\beta_{l+2}(U^i,U,\ldots,U,U^j).
	\end{align*}
	Since, $\beta_{l+2}(1,U,\ldots,U,U^j)=\beta_{l+2}(U^i,U,\ldots,U,1)=0$ the inner double sum starts with $i=j=1$. 
	
	Using the definition of Boolean cumulants and the formula for Boolean cumulants with products as entries \eqref{BoolProd} one can easily obtain the following formula
	%\jacek{wymaga dowodu} \kamil{Dodalem wzor na kumulanty Boolowskie z iloczynami, dowod z wykorzystaniem tego wzoru jest natychmiastowy, uwazam ze tyle wystarczy}
	\begin{align}\label{eq:booleanPowers}
		\beta_{r+1}\left(G,U,\ldots,U,U^i\right)=\beta_{r+1}\left(U^i,U,\ldots,U,G\right)=\sum_{m=1}^i\beta_{r+m}(G,\underbrace{U,\ldots,U}_{r+m-1})\varphi\left(U^{i-m}\right).
	\end{align}

Applying \eqref{eq:booleanPowers} we obtain
$$
\eta^{f,g}_U(z)=\sum_{\ell=0}^{\infty}\,z^{\ell}\sum_{i,j\ge 1}a_ib_j\sum_{k=1}^i\,\sum_{m=1}^j\beta_{\ell+k+m}(U)\varphi(U^{i-k})\varphi(U^{j-m}).
$$
Changing several times the order of inner summations and the variables we get
\begin{align*}
\eta^{f,g}_U(z)&=\sum_{\ell=0}^{\infty}\,z^{\ell}\,\sum_{k,m\ge 1}\,\beta_{l+k+m}(U)\,\varphi\left(\sum_{i\ge k}\,a_iU^{i-k}\right)\,\varphi\left(\sum_{j\ge m}\,b_jU^{j-m}\right)\\
&=\sum_{\ell=0}^{\infty}\,z^{\ell}\,\sum_{k,m\ge 1}\,\beta_{l+k+m}(U)\,\varphi\left({\bf D}^kf(U)\right)\,\varphi\left({\bf D}^mg(U)\right)\\
& =\sum_{\ell=0}^{\infty}\,z^{\ell}\,\sum_{r=2}^{\infty}\,\beta_{\ell+r}(U)\,\sum_{k=1}^{r-1}\,\varphi\left({\bf D}^kf(U)\right)\,\varphi\left({\bf D}^{r-k}g(U)\right)\\
&= \sum_{r=2}^{\infty}\,\left(\sum_{k=1}^{r-1}\,\varphi\left({\bf D}^kf(U)\right)\,\varphi\left({\bf D}^{r-k}g(U)\right)\right)\,\sum_{\ell=0}^{\infty}\,\beta_{\ell+r}(U)z^{\ell}\\
&=\sum_{r=2}^{\infty}\,\left(\sum_{k=1}^{r-1}\,\varphi\left({\bf D}^kf(U)\right)\,\varphi\left({\bf D}^{r-k}g(U)\right)\right)\,D^r\eta_U(z).
\end{align*}
Consequently, 
\begin{align*}
\eta^{f,g}_U&=\sum_{r=2}^{\infty}\,\left(\sum_{k=1}^{r-1}\,\left[\varphi({\bf D}^kf(U))D^k\right]\,\left[\varphi({\bf D}^{r-k}g(U)) D^{r-k}\right]\right)\,\eta_U\\
&=\left[\sum_{k\ge 1}\,\varphi\left({\bf D}^kf(U)\right) D^k\right]\circ\left[\sum_{k\ge 1}\,\varphi({\bf D}^kg(U)) D^k\right]\,\eta_U.
\end{align*}

Second, we consider $\eta_U^f$. By definition  \begin{equation}
\label{fgf}
\eta_U^f(z)=z\eta_U^{f,\mathrm{id}}(z)+\beta_1(f(U)).\end{equation} 

Note that \eqref{etafg} for $g=\mathrm{id}$ together with the second identity of \eqref{DDD} yields
\begin{equation}\label{fid}
\eta_U^{f,\mathrm{id}}=[\varphi^D_U(\psi,f)\circ D]\eta_U.
\end{equation}

For $f(z)=\sum_{i\ge 1}a_iz^i$ we have
	$$
	\beta_1(f(U))=\sum_{i\ge 1}a_i\beta_1(U^i)
	$$
	Thus \eqref{eq:booleanPowers}  yields
	$$
	\beta_1(f(U))=\sum_{i\ge 1}a_i\sum_{k=1}^i\beta_k(U)\varphi(U^{i-k})
	=\sum_{k\ge 1}\beta_k(U)\varphi({\bf D}^kf(U)).
	$$
	Since $\beta_k(U)=(D^k\eta_U)(0)$ we see that
\begin{equation}
\label{beta1}
	\beta_1(f(U))=\sum_{k\ge 1}\,\varphi({\bf D}^kf(U))(D^k\eta_U)(0)=(\varphi_D^U(\psi,\,f)\,\eta_U)(0).
	\end{equation}
	
The final result follows now by inserting \eqref{fid} and \eqref{beta1} into \eqref{fgf}.
\end{proof}
\begin{remark}
	Note that due to the second formula in \eqref{DDD} it follows from \eqref{etafg} that
	$$\eta^{\mathrm{id},\mathrm{id}}_U=D^2\eta_U$$
	and \eqref{etaf} yields
	$$
	\eta^{\mathrm{id}}_U=D\eta_U.
	$$
	The last identity extends to any function $f_r$ defined as $f_r(T)=T^r$, where $r\ge 1$ is an integer, as follows
	$$
	\eta_U^{f_r}=\sum_{j=1}^r\,\varphi(U^{r-j})D^j\eta_U.
	$$

	\end{remark}
\begin{remark}
	One can rewrite the equation \eqref{etafg} in a more straightforward form as
	\begin{align*}
	\eta^{f,g}_U(z)=\sum_{r=2}^{\infty}\,\left(\sum_{k=1}^{r-1}\,\varphi\left(\sum_{i\ge k}\,a_iU^{i-k}\right)\,\varphi\left(\sum_{j\ge r-k}\,b_jU^{j-(r-k)}\right)\right)\,\sum_{\ell=0}^{\infty}\,\beta_{\ell+r}(U)z^{\ell}.
	\end{align*}
	Similarly \eqref{etaf} expands into
	\begin{align*}
	\eta^f_U(z)=\sum_{r=1}^{\infty}\,\varphi\left(\sum_{i\ge r}\,a_iU^{i-r}\right)\,\sum_{\ell=0}^{\infty}\,\beta_{\ell+r}(U)z^{\ell}.
	\end{align*}
	
\end{remark}

Finally, we calculate the conditional expectation \eqref{dualLukacsCondExp} which will be needed in Section 4  in the proof of the second one of the regression characterizations.
\begin{proposition}\label{lem:condExp}
	Let $(\mathcal{A},\varphi)$ be as in Section 2.2.
	Assume that $U,V\in\mathcal{A}$ are free, $0\leq U<1$ and $V$ is bounded. Then for $z$ in some neighbourhood of $0$ 
	\begin{align}\label{evu}
	\E_V\,(1-U)^{-1}U^{1/2}\Psi_{U^{1/2}VU^{1/2}}(z) U^{1/2}(1-U)^{-1}=B(z)+z A^2(z)V\left(1+\Psi_V(\omega_1(z)\right),
	\end{align}
	where
	\begin{align}
	A(z)=&\frac{\eta_U(\omega_2(z))-\eta_U(1)}{\omega_2(z)-1}\,\varphi\left((1-U)^{-1}\right),\label{az}\\
	B(z)=&\frac{\omega_2(z)(\eta_U(\omega_2(z))-\eta_U(1)-(\omega_2(z)-1)\eta_U'(1))}{(\omega_2(z)-1)^2}\,\varphi^2\left((1-U)^{-1}\right)\label{bz}
	\end{align}
	and $\omega_1,\omega_2$  satisfy \eqref{w12}.
\end{proposition}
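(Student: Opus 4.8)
The plan is to recognize \eqref{evu} as the specialization $f=g=\psi$ of Proposition \ref{prop:fgce}. Since $\psi(U)=U(1-U)^{-1}$, we have $\psi(U)U^{-1/2}=(1-U)^{-1}U^{1/2}$ and $U^{-1/2}\psi(U)=U^{1/2}(1-U)^{-1}$, so the left-hand side of \eqref{evu} coincides with that of \eqref{ev} taken at $f=g=\psi$. Reading off the right-hand side of \eqref{ev}, whose two summands are $\omega_2(z)\eta_U^{\psi,\psi}(\omega_2(z))$ and $z\,\eta_U^{\psi}(\omega_2(z))^2\,V(1+\Psi_V(\omega_1(z)))$, the whole proposition reduces to showing that $A(z)=\eta_U^{\psi}(\omega_2(z))$ and $B(z)=\omega_2(z)\,\eta_U^{\psi,\psi}(\omega_2(z))$ with $A,B$ as in \eqref{az}--\eqref{bz}. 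Thus I only need to evaluate the two transforms $\eta_U^{\psi}$ and $\eta_U^{\psi,\psi}$ delivered by Proposition \ref{TT}.

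The key computational step is a closed form for the operator $\psi(D)=\sum_{k\ge1}D^k$. By \eqref{psipsi} together with the identity $1+\Psi_U=(1-U)^{-1}$ (valid since $1+U(1-U)^{-1}=(1-U)^{-1}$), one has $\varphi_D^U(\psi,\psi)=\varphi\big((1-U)^{-1}\big)\,\psi(D)$, so \eqref{etafg} at $f=g=\psi$ gives $\eta_U^{\psi,\psi}=\varphi\big((1-U)^{-1}\big)^2\,\psi(D)^2\,\eta_U$. I would therefore first establish that for any $h$ analytic through $z=1$,
\[
\psi(D)\,h(z)=\frac{h(z)-h(1)}{z-1}.
\]
This follows from the operator relation $(I-D)\psi(D)=D$: applying it to $h$ and using $Dh(z)=\tfrac{h(z)-h(0)}{z}$ forces $\psi(D)h(z)=\tfrac{h(z)-h(0)-\psi(D)h(0)}{z-1}$, while the evaluation $\psi(D)h(0)=\sum_{k\ge1}D^kh(0)=\sum_{k\ge1}\tfrac{h^{(k)}(0)}{k!}=h(1)-h(0)$ removes the constant. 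In other words, $\psi(D)$ is the divided difference of $h$ at the nodes $z$ and $1$.

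Granting this identity the evaluations are immediate. For $\eta_U^{\psi,\psi}$ I iterate: $\psi(D)\eta_U(z)=\tfrac{\eta_U(z)-\eta_U(1)}{z-1}$, and a second application (whose value at the node $z=1$ is $\eta_U'(1)$) yields $\psi(D)^2\eta_U(z)=\tfrac{\eta_U(z)-\eta_U(1)-(z-1)\eta_U'(1)}{(z-1)^2}$; multiplying by $\varphi\big((1-U)^{-1}\big)^2$ and then by $\omega_2(z)$ and substituting $z\mapsto\omega_2(z)$ reproduces \eqref{bz}. For $\eta_U^{\psi}$ I use \eqref{etaf}: since $\eta_U(0)=0$ we have $D\eta_U(z)=\eta_U(z)/z$, so the identity gives $\psi(D)D\eta_U(z)=\tfrac{\eta_U(z)-z\eta_U(1)}{z(z-1)}$, and plugging into \eqref{etaf} while combining with the boundary term $(\varphi_D^U(\psi,\psi)\eta_U)(0)=\varphi\big((1-U)^{-1}\big)\eta_U(1)$ collapses everything to $\eta_U^{\psi}(z)=\varphi\big((1-U)^{-1}\big)\tfrac{\eta_U(z)-\eta_U(1)}{z-1}$, which is \eqref{az} after $z\mapsto\omega_2(z)$.

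The only genuinely delicate point is the closed form $\psi(D)h(z)=\tfrac{h(z)-h(1)}{z-1}$ and, with it, the control of the evaluations at the node $z=1$: the quantities $\eta_U(1)$, $\eta_U'(1)$ and the interchange $\psi(D)h(0)=h(1)-h(0)$ all require $\eta_U$ to extend analytically through $z=1$ and the series $\sum_k D^k\eta_U$ to converge there, which is exactly where the hypotheses $0\le U<1$ and analyticity of $f,g$ on the unit disc are used; the restriction of $z$ to a neighbourhood of $0$ and the fact that $\omega_2(z)$ stays near $0$ (so that $\omega_2(z)-1$ is bounded away from $0$) keep all denominators harmless. Once the operator identity is in place, the remainder is bookkeeping: matching the two explicit expressions against \eqref{az} and \eqref{bz} and against the two summands of \eqref{ev}.
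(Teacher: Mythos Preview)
Your proof is correct and follows essentially the same route as the paper's own argument: specialize Proposition~\ref{prop:fgce} at $f=g=\psi$, reduce to computing $\eta_U^{\psi}$ and $\eta_U^{\psi,\psi}$ via Proposition~\ref{TT}, and then exploit the closed form $\psi(D)h(z)=\tfrac{h(z)-h(1)}{z-1}$ (the paper's \eqref{DD1}) to evaluate both transforms. The only cosmetic difference is that you derive this closed form from the telescoping operator relation $(I-D)\psi(D)=D$, whereas the paper obtains it by directly summing the double power series; the subsequent bookkeeping (iterating $\psi(D)$ to pick up $\eta_U'(1)$, handling the boundary term $(\psi(D)\eta_U)(0)=\eta_U(1)$, and collapsing the two pieces of \eqref{etaf}) matches the paper step for step.
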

\begin{proof}
	From \eqref{ev} with $f=g=\psi$ we see that the conditional expectation at the left hand side of \eqref{evu} is of the form
	$$
	\omega_2(z)\eta_U^{\psi,\psi}(\omega_2(z))+z \left(\eta_U^{\psi}(\omega_2(z))\right)^2\left(1+\Psi_V(\omega_1(z))\right)V,
	$$
	i.e. we need only to show that \begin{itemize} \item $\eta_U^{\psi}(\omega_2(z))=A(z)$, where $A$  is defined in \eqref{az}; \end{itemize}
		
		 and 
		 \begin{itemize}
		 \item $\omega_2(z)\eta_U^{\psi,\psi}(\omega_2(z))=B(z)$, where $B$ is defined in \eqref{bz}. 
		 \end{itemize}
	 
	We first compute $\eta_U^{\psi}$. To this end we rely on \eqref{etaf}. Observe that \eqref{psipsi} yields 
	\begin{align*}
	\varphi_D^U(\psi,\psi)(D\eta_U)=\varphi(1+\Psi_U)\sum_{k\ge 1}D^{k+1}\eta_U=\varphi(1+\Psi_U)(D\psi(D))\eta_U.
	\end{align*}
	Note that for a power series $h(z)=\sum_{j\ge 0}\,h_jz^j$ we have
	\begin{align}
	\label{DD1}
	\psi(D)\,(h)(z):=\sum_{k\ge 1}\,D^kh(z)=\sum_{k\ge 1}\sum_{j\ge k}h_jz^{j-k}=\sum_{j\ge 1}\,h_j\sum_{k=1}^j\,z^{j-k}=\tfrac{h(z)-h(1)}{z-1}.
	\end{align}
	Therefore, using \eqref{DD1} we get
	$$
	D\psi(D)\,(h)(z)=D\left(\tfrac{h(z)-h(1)}{z-1}\right)=\tfrac{Dh(z)-h(1)}{z-1}.
	$$
	Thus, since $zD\eta_U(z)=\eta_U(z)$, we get
	$$
	z\varphi_D^U(\psi,\,\psi)(D\eta_U)(z)=\varphi(1+\Psi_U)\tfrac{\eta_U(z)-z\eta_U(1)}{z-1}.
	$$
	
	Note also that due \eqref{DD1}  and  $\eta_U(0)=0$ we obtain 
	$$
	\varphi_D^U(\psi,\,\psi)(\eta_U)(0)=\varphi(1+\Psi_U)(\psi(D)\eta_U)(0)=\varphi(1+\Psi_U)\eta_U(1)
	$$
	Thus, \eqref{etaf} implies $A(z):=\eta_U^{\psi}(\omega_2(z))$ .

	Now we calculate $\eta_U^{\psi,\psi}$. Using  \eqref{etafg} and then first only once referring to \eqref{DD1}, we get
	$$
	\eta_U^{\psi,\psi}=\varphi^2(1+\Psi_U)\,\psi^{\circ 2}(D)\eta_U=\varphi^2(1+\Psi_U)\,\psi(D)\,K,$$
	where the function $K$ is defined by 
	$$
K(w)=\tfrac{\eta_U(w)-\eta_U(1)}{w-1}.
	$$
	Since $K(1)=\eta_U'(1)$, upon using again \eqref{DD1},  we  get
	$$
	\eta_U^{\psi,\psi}(w)=\varphi^2(1+\Psi_U)\,\tfrac{\tfrac{\eta_U(w)-\eta_U(1)}{w-1}-\eta_U'(1)}{w-1}
	$$
	and the formula for $B$ follows.
\end{proof}
\section{Dual Lukacs regressions of negative orders}

In this section we present simplified proofs of results from \cite{SzpDLRNeg}. We start with the regression 
characterization which can be approached just  by subordination 
technique, see \cite{EjsmontFranzSzpojankowski:2015:convolution}, without necessity to refer to Boolean 
cumulants.
\begin{theorem} \label{thm:11}
	Let $(\mathcal{A},\varphi)$ be a $W^*$ probability space. Let $U,V\in\mathcal{A}$ be free and such that $0<U<1$ and $V>0$. Assume that for some $b,c\in\R$
	\begin{align}
		\label{eq:1}
		\E_{V^{1/2}UV^{1/2}}\,V^{1/2}(1-U)V^{1/2}&=b \mathit{I},\\
	\label{eq:2}
			\E_{V^{1/2}UV^{1/2}}\,[V^{1/2}(1-U)V^{1/2}]^{-1}&=c \mathit{I}.
	\end{align}
	Then $b,c>0$, $bc>1$ and, with $\alpha=\varphi(\Psi_U)>0$, \begin{itemize}
		\item $V$ has free Poisson distribution $\mu\left(\tfrac{bc-1}{c},\,\tfrac{bc+\alpha}{bc-1}\right)$,
	\item  $U$ has free binomial distribution $\nu\left(\tfrac{\alpha}{bc-1},\,\tfrac{bc}{bc-1}\right)$.
	\end{itemize}
\end{theorem}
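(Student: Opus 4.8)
The plan is to convert both regression identities into functional equations for the $S$-transforms of $U$ and $V$, solve the resulting two-by-two system, and recognize the solutions as the $S$-transforms of the free binomial and free Poisson laws recorded in Section 2.3. Write $W=V^{1/2}UV^{1/2}$ and $T=V^{1/2}(1-U)V^{1/2}$, so that $T+W=V$ and, by traciality, $M:=M_W=M_{UV}$. Since the resolvents $(1-zW)^{-1}=1+\Psi_W(z)$ (for small $z$) generate the algebra of $W$, the equivalence \eqref{eqn:equiv} turns \eqref{eq:1} and \eqref{eq:2} into $\varphi\bigl(T(1-zW)^{-1}\bigr)=b\,\varphi\bigl((1-zW)^{-1}\bigr)$ and $\varphi\bigl(T^{-1}(1-zW)^{-1}\bigr)=c\,\varphi\bigl((1-zW)^{-1}\bigr)$, with $\varphi\bigl((1-zW)^{-1}\bigr)=1+M(z)$. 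Throughout I use $\chi_X:=M_X^{-1}$ (compositional inverse) together with $\chi_X(y)=\tfrac{y}{1+y}S_X(y)$ and the multiplicativity $S_{UV}=S_US_V$.

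I would first dispatch \eqref{eq:1}. Writing $T=V-W$ splits the left side; the term $\varphi\bigl(W(1-zW)^{-1}\bigr)$ equals $M(z)/z$, and the only mixed term $\varphi\bigl(V\Psi_W(z)\bigr)$ is evaluated by pulling $V$ through the conditional expectation onto the algebra of $V$ and applying the subordination formula \eqref{w1}, giving $\varphi\bigl(V(1-zW)^{-1}\bigr)=M(z)/\omega_1(z)$ after using \eqref{w12}. Thus \eqref{eq:1} reads $M(z)\bigl(\omega_1(z)^{-1}-z^{-1}\bigr)=b\,(1+M(z))$. Since $\omega_1(z)=\chi_V(M(z))$ and $z=\chi_{UV}(M(z))$, one gets $\omega_1(z)/z=1/S_U(y)$ with $y=M(z)$, and the equation collapses to the first relation $S_U(y)-1=b\,S_U(y)S_V(y)$ for $y$ near $0$.

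The main obstacle is \eqref{eq:2}, since $T^{-1}=V^{-1/2}(1-U)^{-1}V^{-1/2}$ does not split additively in $V$ and $W$. The key maneuver is to expand $(1-zW)^{-1}$, conjugate by $V^{\pm1/2}$, and use traciality to rewrite $\varphi\bigl(T^{-1}(1-zW)^{-1}\bigr)$ as the factorized constant $\varphi(V^{-1})\varphi((1-U)^{-1})$ plus a single term $z\,\varphi\bigl(k(U)(1-zU^{1/2}VU^{1/2})^{-1}\bigr)$, where $k(U)=U(1-U)^{-1}=\Psi_U(1)$. Now the conditional expectation onto the algebra of $U$ together with \eqref{w2} reduces the last term to the one-variable quantity $\varphi\bigl(\Psi_U(1)\Psi_U(\omega_2(z))\bigr)$, which the two-point identity $\varphi(\Psi_U(s)\Psi_U(t))=\tfrac{tM_U(s)-sM_U(t)}{s-t}$ evaluates in closed form. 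After simplification \eqref{eq:2} becomes $\varphi(V^{-1})\varphi((1-U)^{-1})+z\,\tfrac{\varphi((1-U)^{-1})-1-M(z)}{1-\omega_2(z)}=c\,(1+M(z))$, and rewriting $\omega_2(z)=\chi_U(M(z))$ and $z=\chi_{UV}(M(z))$ through $S$-transforms yields the second relation between $S_U$ and $S_V$.

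Finally I would solve the system. Evaluating the second relation at $y=0$ forces the compatibility condition $c=\varphi((1-U)^{-1})\varphi(V^{-1})$; with this, an overall factor of $y$ cancels and the relation becomes linear in $S_U(y)$, solved by $S_U(y)=\tfrac{(bc+\alpha)+(bc-1)y}{\alpha+(bc-1)y}$ with $\alpha=\varphi(\Psi_U)=M_U(1)$. Matching against $S_U=1+\tfrac{\theta}{\sigma+y}$ identifies $U$ as free binomial with $\sigma=\tfrac{\alpha}{bc-1}$, $\theta=\tfrac{bc}{bc-1}$; substituting back into the first relation gives $S_V(y)=\tfrac{c}{(bc+\alpha)+(bc-1)y}=\tfrac{1}{\alpha_V(\lambda_V+y)}$ with $\alpha_V=\tfrac{bc-1}{c}$ and $\lambda_V=\tfrac{bc+\alpha}{bc-1}$, so $V$ is free Poisson with the stated parameters. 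The positivity $b,c>0$ follows from $\varphi(T)=b$ and $\varphi(T^{-1})=c$ with $T>0$, and $bc>1$ is the strict Cauchy--Schwarz inequality $\varphi(T)\varphi(T^{-1})>1$ (strict because $T$ is non-scalar); these inequalities in turn guarantee that all four parameters $\sigma,\theta,\alpha_V,\lambda_V$ are positive.
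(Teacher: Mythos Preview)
Your proposal is correct and follows essentially the same strategy as the paper: test the regression identities against resolvents $(1-zW)^{-1}$, apply multiplicative subordination to reduce to one-variable moment transforms, and solve the resulting $2\times 2$ system for the $S$-transforms. The only notable variation is in the treatment of \eqref{eq:1}: the paper swaps $U$ and $V$ via the algebraic identity \eqref{eq:Psi} and projects onto the algebra of $U$ (so both equations are written in terms of $\omega_2$ and $M_U^{-1}$), whereas you use the additive split $T=V-W$ and project onto the algebra of $V$ via $\omega_1$; both routes land on the same relation $S_U-1=bS_US_V$. For \eqref{eq:2} the two arguments are the same computation in slightly different dress---your ``two-point identity'' is exactly the content of the paper's identity \eqref{simal}. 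One small slip: after applying \eqref{w2} the term is $\varphi\bigl(\Psi_U(1)\,(1+\Psi_U(\omega_2(z)))\bigr)$, not $\varphi\bigl(\Psi_U(1)\Psi_U(\omega_2(z))\bigr)$, but your displayed equation already absorbs the extra $z\alpha$ correctly. Your explicit Cauchy--Schwarz justification of $bc>1$ is a nice addition that the paper leaves implicit.
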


\begin{remark}
	This result holds, with the same proof, for unbounded random variables $U,V$ affiliated with respective von Neumann algebras, similarly as in \cite{EjsmontFranzSzpojankowski:2015:convolution}. The same framework was considered in \cite{BianeProceFreeIncr}. Then it is enough to assume that $U,V>0$ are such that $(1-U)^{-1}$ exists and
	$\varphi(V)$, $\varphi(V^{-1})$, $\varphi(U)$ and $\varphi((1-U)^{-1})$ are finite.
\end{remark}

\begin{proof}
	Consider first  \eqref{eq:2} which after multiplication by $\Psi_{V^{1/2}UV^{1/2}}(z)$ implies
	\begin{align}\label{uvu}
		L:=\varphi\left((1-U)^{-1}V^{-1/2}\Psi_{V^{1/2}UV^{1/2}}(z)V^{-1/2}\right)=c M_{V^{1/2}UV^{1/2}}(z).
	\end{align}
	It is easy to see through purely algebraic manipulations that for non-commutative $W>0$ and $T>0$
	\begin{align}\label{eq:Psi}
		W^{-1/2}\Psi_{W^{1/2}TW^{1/2}}(z)W^{-1/2}=z T^{1/2}\left(\Psi_{T^{1/2}WT^{1/2}}(z)+1\right) T^{1/2}.
	\end{align}
Therefore applying \eqref{eq:Psi} with $(W,T)=(V,U)$ to \eqref{uvu} we get 
	\begin{align*}
		L&=\varphi\left(z (1-U)^{-1}U^{1/2} (\Psi_{U^{1/2}VU^{1/2}}(z)+1)U^{1/2}\right)\\&=\varphi\left(z (1-U)^{-1}U^{1/2} (\E_U\,\Psi_{U^{1/2}VU^{1/2}}(z)+1)U^{1/2}\right)\\&=z\varphi\left( (1-U)^{-1}U(\Psi_U(\omega_2(z))+1)\right).
	\end{align*}
By simple algebra 
\begin{align}\label{simal}
U(1-U)^{-1}(\Psi_U(t)+1)=\tfrac{1}{t-1}\left(\Psi_U(t)-\Psi_U(1)\right)
\end{align}
and thus (we write below $\omega_2=\omega_2(z)$)
\begin{align*}
L=z\frac{M_U(\omega_2)-\alpha}{\omega_2-1}.
\end{align*}
	Consequently, \eqref{w12} and \eqref{uvu} yield
\begin{equation}\label{eq12}
z\left(M_U(\omega_2)-\alpha\right)=c(\omega_2-1)M_U(\omega_2).
\end{equation}

Similarly, we multiply both sides of \eqref{eq:1} by $z\Psi_{V^{1/2}UV^{1/2}}(z)$ and obtain
\begin{align}\label{em}
M:=z\varphi\left((1-U)V^{1/2}\Psi_{V^{1/2}UV^{1/2}}(z)V^{1/2}\right)=zb M_{V^{1/2}UV^{1/2}}(z).
\end{align}
Applying \eqref{eq:Psi} with $(W,T)=(U,V)$ we see that 
\[
M=\phi((1-U)U^{-1/2}\Psi_{U^{1/2}VU^{1/2}}(z)U^{-1/2})-z\phi((1-U)V).
\]
Thus traciality and subordination \eqref{w2} yield
	$$
M=\phi(\Psi_U^{-1}\,\Psi_U(\omega_2(z)))-zb
$$
since \eqref{eq:1} implies $\phi((1-U)V)=b$. We rewrite \eqref{simal} as
$$
\Psi_U^{-1}\Psi_U(t)=t+(t-1)\Psi_U(t)
$$
and plug it into $M$. Taking additionally into account \eqref{w12} at the RHS of \eqref{em} we finally get
\begin{equation}
\label{eq11}
\omega_2(z)+(\omega_2-1)M_U(\omega_2)=bz (M_U(\omega_2)+1).
\end{equation}

Identity $M_{UV}(z)=M_U(\omega_2(z))$ can be written in terms of inverse functions as $\omega_2(M_{UV}^{\<-1\>}(s))=M_U^{\<-1\>}(s)$ (for the discussion about the existence of an inverse see \cite{BelinschiBercoviciPartiallyDefSemigroups}). Thus rewriting \eqref{eq11} and \eqref{eq12} in terms of $M_{UV}^{\<-1\>}(s)$ and $M_U^{\<-1\>}(s)$ we obtain the following system of linear equations
\begin{equation}\label{linsys1}
\left\{\begin{array}{l}
b(1+s)M_{UV}^{\<-1\>}(s)=(1+s)M_U^{\<-1\>}(s)-s,\\
(s-\alpha)M_{UV}^{\<-1\>}(s)=cs\left(M_U^{\<-1\>}(s)-1\right).
\end{array}\right.
\end{equation}

We solve this system in terms of $M_U^{-1}$ and $M_{UV}^{-1}$ and thus  obtain $S$-transforms
\begin{align*}
S_U(s)= 1+\frac{bc}{\alpha+(bc-1)s}\\
S_{UV}(s)=\frac{c}{\alpha+s(bc-1)}.
\end{align*}
By freeness of $U$ and $V$ we know that $S_{UV}=S_U\,S_V$, which allows to compute the $S$-transform of $V$,
\begin{align*}
S_V(s)=\frac{c}{bc+\alpha+(bc-1)s}.
\end{align*}
Since the $S$-transform determines the distribution uniquely, the result follows.
\end{proof}

The next regression characterization cannot be 
proved just by referring to subordination as in the proof above. The 
proof we give below shows how useful can be enhancement of the subordination 
methodology with explicit formulas for conditional expectations 
expressed in terms of Boolean cumulants. We expect that this kind of 
approach can be of use also in other regression characterizations for 
which a simple subordination technique is not a sufficient tool, e.g. in regression characterizations related to
recent results for the free GIG and free Poisson see \cite{SzpojanMY} and of the free Kummer and free Poisson given in \cite{Piliszek}. \begin{theorem}
	Let $0<U<1$ and $V>0$ be free, $V$ bounded. Assume that for some  $c,d\in\R$ condition  \eqref{eq:2} holds and
	\begin{align}
		\E_{V^{1/2}UV^{1/2}}\,[V^{1/2}(1-U)V^{1/2}]^{-2}=d \mathit{I}.
		\label{eq:22}		
			\end{align}
Then $d>c^2$ and with $\alpha=\varphi(\Psi_U)$ \begin{itemize}
	\item $V$ has free Poisson distribution $\mu\left(\tfrac{d-c^2}{c^3},\,\tfrac{c^2\alpha+d}{d-c^2}\right)$,
	\item $U$ has free binomial distribution $\nu\left(\tfrac{c^2\alpha}{d-c^2},\,\tfrac{d}{d-c^2}\right)$.
\end{itemize} 
\end{theorem}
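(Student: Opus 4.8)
The plan is to mimic the proof of Theorem \ref{thm:11}: produce two functional equations for the pair $\bigl(M_U(\omega_2(z)),\omega_2(z)\bigr)$ and then solve for the $S$-transforms. Since condition \eqref{eq:2} is common to both theorems, it again yields, after multiplication by $\Psi_{V^{1/2}UV^{1/2}}(z)$ and use of \eqref{eq:Psi}, \eqref{w2} and \eqref{w12}, the first relation
\[
z\bigl(M_U(\omega_2(z))-\alpha\bigr)=c\,(\omega_2(z)-1)\,M_U(\omega_2(z)).
\]
The whole task is therefore to extract a second relation from \eqref{eq:22}, and this is where the machinery of Section 3 enters.

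To process \eqref{eq:22} I would multiply it by $\Psi_{V^{1/2}UV^{1/2}}(z)$ and apply $\varphi$, getting $\varphi\bigl([V^{1/2}(1-U)V^{1/2}]^{-2}\Psi_{V^{1/2}UV^{1/2}}(z)\bigr)=d\,M_{UV}(z)$. Writing $[V^{1/2}(1-U)V^{1/2}]^{-2}=V^{-1/2}(1-U)^{-1}V^{-1}(1-U)^{-1}V^{-1/2}$, then using traciality and the identity \eqref{eq:Psi} with $(W,T)=(V,U)$, the left-hand side becomes
\[
z\,\varphi\Bigl(V^{-1}(1-U)^{-1}U^{1/2}\bigl(\Psi_{U^{1/2}VU^{1/2}}(z)+1\bigr)U^{1/2}(1-U)^{-1}\Bigr).
\]
Splitting off the $+1$, the genuinely mixed part is precisely $V^{-1}$ times the integrand of \eqref{dualLukacsCondExp}; hence by the defining property of $\E_V$ and by Proposition \ref{lem:condExp} it equals $z\,\varphi\bigl(V^{-1}[B(z)+zA^2(z)V(1+\Psi_V(\omega_1(z)))]\bigr)$, while the leftover term factorizes by freeness into $z\,\varphi(V^{-1})\,\varphi\bigl((1-U)^{-2}U\bigr)$. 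This is the step that could not be carried out by subordination alone, exactly as advertised in the introduction.

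It then remains to assemble and simplify. I would record the elementary identities $\varphi((1-U)^{-1})=1+\alpha$, $\varphi(V^{-1})=c/(1+\alpha)$ (the latter from \eqref{eq:2} and freeness), $\varphi((1-U)^{-2}U)=M_U'(1)$, and $\varphi(1+\Psi_V(\omega_1(z)))=1+M_{UV}(z)$ from \eqref{w12}. Substituting the explicit $A(z),B(z)$ from \eqref{az}--\eqref{bz}, rewriting $\eta_U$ and $\eta_U'$ through $M_U$, and using the first relation to eliminate the factor $\omega_2(z)-1$, collapses everything into a single scalar equation in $M_U(\omega_2(z))$, $\omega_2(z)$, $M_{UV}(z)$ and the constant $e:=M_U'(1)$. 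Passing to the inverse functions $M_U^{\langle-1\rangle}$ and $M_{UV}^{\langle-1\rangle}$, as in Theorem \ref{thm:11}, turns the two relations into a linear system whose solution expresses $S_{UV}$ and $S_U$ as rational functions still carrying the undetermined $e$.

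The main obstacle is twofold. First, the reduction of \eqref{eq:22} to \eqref{dualLukacsCondExp}: the central $V^{-1}$ obstructs the direct subordination that sufficed for Theorem \ref{thm:11}, so one is forced through Proposition \ref{lem:condExp}. Second, and more subtly, the derivative term $e=M_U'(1)$ does not cancel. I expect to pin it down by the normalization $M_U(0)=0$ (equivalently $M_U^{\langle-1\rangle}(0)=0$): imposing it forces $e=\alpha d(1+\alpha)/c^2$, after which the denominators collapse to $(d-c^2)s+c^2\alpha$ and one reads off $S_U(s)=\frac{c^2\alpha+(d-c^2)s+d}{c^2\alpha+(d-c^2)s}$ and $S_{UV}(s)=\frac{c^3}{c^2\alpha+(d-c^2)s}$, whence $S_V=S_{UV}/S_U$ is the free Poisson and $S_U$ the free binomial $S$-transform with the stated parameters. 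The strict inequality $d>c^2$ (which places all parameters in the admissible ranges) I would obtain from the operator Jensen inequality applied to the two conditions, giving $d\,I\ge c^2 I$ with equality only in the degenerate case, together with faithfulness of $\varphi$ to get $c>0$.
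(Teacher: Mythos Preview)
Your proposal is correct and follows the paper's proof almost verbatim: the same multiplication by $\Psi_{V^{1/2}UV^{1/2}}(z)$, the same use of traciality and \eqref{eq:Psi}, the same appeal to Proposition~\ref{lem:condExp} for the conditional expectation \eqref{dualLukacsCondExp}, and the same passage to inverse functions at the end.

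The one genuine difference is how you handle $e=M_U'(1)=\varphi\bigl(U(1-U)^{-2}\bigr)$. You carry $e$ through the algebra and fix it at the end by imposing $M_U^{\langle-1\rangle}(0)=0$; this does yield $e=\alpha d(1+\alpha)/c^2$, so it works. The paper instead determines $e$ \emph{at the outset}: multiply \eqref{eq:22} by $V^{1/2}UV^{1/2}$, take $\varphi$, and use freeness to get $\varphi(V^{-1})\,\varphi\bigl(U(1-U)^{-2}\bigr)=d\,\varphi(U)\varphi(V)$, which combined with $\alpha=c\,\varphi(U)\varphi(V)$ and $\varphi(V^{-1})=c/(1+\alpha)$ gives $e=\alpha d(1+\alpha)/c^2$ directly. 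The payoff of the paper's route is that once $e$ is known the second equation collapses exactly to the form of \eqref{eq11} with $b=d/c^3$, so the linear system \eqref{linsys1} and its solution can be reused without change. In your version the intermediate equation still carries a $\tfrac{1}{s-\alpha}$ term until $e$ is fixed, so the system is not literally linear in $M_U^{\langle-1\rangle}(s)$ and $M_{UV}^{\langle-1\rangle}(s)$ at that stage (your phrase ``linear system'' is therefore slightly premature), but it is of course still solvable. Regarding $d>c^2$: the paper does not argue this separately; it is read off from the fact that the resulting $S$-transforms must correspond to genuine distributions in the admissible parameter range. Your Jensen-type argument is a reasonable alternative but is not what the paper does.
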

\begin{proof}
	We first observe that, as in the previous proof, \eqref{eq:2} implies \eqref{eq12}.
	
	Then we consider \eqref{eq:22}. We multiply its both sides by $\Psi_{V^{1/2}UV^{1/2}}(z)$ and apply state to get
	\begin{align}\label{en}
		N:=\varphi\left(V^{-1/2}(1-U)^{-1}V^{-1}(1-U)^{-1}V^{-1/2}\Psi_{V^{1/2}UV^{1/2}}\right)=d M_{V^{1/2}UV^{1/2}}
	\end{align}
	Using traciality of $\varphi$ and identity \eqref{eq:Psi} with $(W,T)=(V,U)$ we obtain
	\begin{align*}
		N&= z\varphi\left((1-U)^{-1}V^{-1}(1-U)^{-1}U^{1/2}\left(\Psi_{U^{1/2}VU^{1/2}}(z)+1\right) U^{1/2}\right)\\
		&=z\varphi\left(V^{-1}(1-U)^{-1}U^{1/2}\Psi_{U^{1/2}VU^{1/2}}(z) U^{1/2}(1-U)^{-1}\right)+z\varphi\left(V^{-1}(1-U)^{-2}U\right)\\
		&=z\varphi\left(V^{-1}\E_V\,(1-U)^{-1}U^{1/2}\Psi_{U^{1/2}VU^{1/2}}(z) U^{1/2}(1-U)^{-1}\right)+z\varphi\left(V^{-1}(1-U)^{-2}U\right).
	\end{align*}
	By Proposition \ref{lem:condExp} we get
	$$
	N=zB(z)\phi(V^{-1})+z^2A^2(z)(1+M_V(\omega_1(z)))+z\varphi\left(V^{-1}(1-U)^{-2}U\right).
	$$

	Since $\eta_U=\tfrac{M_U}{1+M_U}$ it follows that $\eta_U' =\tfrac{M_U'}{(1+M_U)^2}$. Also $M_U(1)=\phi\left(\sum_{k=1}^{\infty}\,U^k\right)=\varphi(\Psi_U)$ and $M_U'(1)=\phi\left(U\sum_{k=1}^{\infty}\,kU^{k-1}\right)=\phi(U(1-U)^{-2})$. Moreover, \eqref{eq:2} implies
	$$\varphi(V^{-1})\varphi((1-U)^{-1})=c\quad \mbox{and}\quad \phi(\Psi_U)=c\varphi(U)\phi(V)$$
	and \eqref{eq:22} yields
	$$
	\varphi(U(1-U)^{-2})\varphi(V^{-1})=d\varphi(U)\varphi(V).
	$$
	Additionally, we easily see that
	$$
	\varphi(V^{-1})=\frac{c}{\alpha+1},\quad \eta_U(1)=\frac{\alpha}{1+\alpha},$$$$ M_U'(1)=\phi(U(1-U)^{-2})=\tfrac{\alpha(1+\alpha)d}{c^2},\quad \eta_U'(1)=\frac{\alpha d}{(1+\alpha)c^2}.
	$$
Moreover, $M_V(\omega_1(z))=M_U(\omega_2(z))$. Summing up, (below $\omega_2=\omega_2(z)$)
	$$
	N=\tfrac{cz\omega_2}{(\omega_2-1)^2}\,\tfrac{M_U(\omega_2)-\alpha}{M_U(\omega_2)+1}+\left(\tfrac{z}{\omega_2-1}\right)^2\tfrac{(M_U(\omega_2)-\alpha)^2}{M_U(\omega_2)+1}-\tfrac{\alpha d z}{c(\omega_2-1)}.
	$$
Since $M_{V^{1/2}UV^{1/2}}(z)=M_U(\omega_2)$ at the RHS of \eqref{en} this equation upon multiplication both sides by $z$ can be written as
$$
\left(\tfrac{z}{c(\omega_2-1)}\right)^2\,\tfrac{M_U(\omega_2)-\alpha}{M_U(\omega_2+1}\,c^2\,\left(c\omega_2-z\left[M_U(\omega_2)-\alpha\right]\right)=dz\left(\tfrac{z}{c(\omega_2-1)}\,\alpha+M_U(\omega_2)\right).
$$
Using now \eqref{eq12} in the form $\tfrac{z}{c(\omega_2-1)}=\tfrac{M_U(\omega_2)}{M_U(\omega_2)+1}$  we get 
$$
\tfrac{c^2}{M_U(\omega_2)+1}\left(c\omega_2-z\left[M_U(\omega_2)-\alpha\right]\right)=dz.
$$

Now, plug in $z\left[M_U(\omega_2)-\alpha\right]=c(\omega_2-1)M_U(\omega_2)$, which is another reformulation of \eqref{eq12}, to conclude that
\begin{equation}\label{ome}
\omega_2+(\omega_2-1)M_U(\omega_2)=\tfrac{d}{c^3}z(M_U(\omega_2)+1).
\end{equation}
Note that \eqref{ome} upon substitution $b=d/c^3$ is the same as \eqref{eq11}. Therefore \eqref{eq12} and \eqref{ome} is the same system of equations as in the previous proof and thus the result follows.
\end{proof}
\section{Direct free Lukacs property for Marchenko-Pastur law}
Recall that in classical probability the following two implications are trivially
equivalent: 
\begin{itemize} \item if $X$ and $Y$ are independent gamma variables with the same scale parameter $\lambda$ and shape parameters $\alpha_X$ and $\alpha_Y$, respectively, then $U=X/(X+Y)$ and $V=X+Y$ are independent, $U$ is beta with parameters $\alpha_X,\,\alpha_Y$ and $V$ is gamma with the shape $\alpha_X+\alpha_Y$ and scale $\lambda$;
	\item if $U$ and $V$ are independent random variables, $U$ is beta with parameters $\alpha_X,\,\alpha_Y$ and $V$ is gamma with the shape $\alpha_X+\alpha_Y$ and scale $\lambda$, then $X=UV$ and $Y=(1-U)V$ are independent gamma variables with the same scale parameter $\lambda$ and shape parameters $\alpha_X$ and $\alpha_Y$, respectively.
\end{itemize}
However, the situation in free probability changes drastically, see the proofs of the dual Lukacs property in \cite{SzpojanWesol} and of the free direct Lukacs property in \cite{SzpLukacsProp}.
	
	  Until this moment we considered regression versions of the free dual Lukacs property which says that for $U$ and $V$ which are free and, respectively, free binomial and free Poisson distributions with properly interrelated parameters then $X=V^{1/2}UV^{1/2}$ and $Y=V^{1/2}(1-U)V^{1/2}$ are also free and have both free Poisson distributions. It appears that this free dual Lukacs property can be used as the main tool to construct a pocket proof of the direct free Lukacs property.  Let us note that the original proof of the direct Lukacs property for free Poisson distribution, as given in \cite{SzpLukacsProp}, relied heavily on combinatorics of free cumulants and non-crossing partitions. In particular, one of its highlights was explicit expression for the joint free cumulants of $X$ and $X^{-1}$ for free Poisson distributed $X$. 

Finally, before we present  the free direct Lukacs property and its proof, let us emphasize another methodological distinction between classical and free probability which is related to characterizations by independence/freeness.  In classical probability derivation of independence properties  (e.g. Lukacs or Matsumoto-Yor or Hamza-Vallois properties) is completely elementary (it relies on just derivation  of the jacobian of the considered transformation). Only the converse problems, characterizations, typically present serious mathematical challenges. For  free counterparts, as a rule, both the property and the characterization are challenging questions - see e.g. for the free version of the Matsumoto-Yor property/characterization and \cite{Piliszek} for the free version of the Hamza-Vallois property/characterization. In these two cases the transformations $\psi$ mapping independent/free $X$, $Y$ into $U$ and $V$ are involutions, i.e. $\psi=\psi^{-1}$ and thus the direct and dual properties are tautologically equivalent.  
\begin{theorem}
	Let $(\mathcal{A},\varphi)$ be a non-commutative probability space, let $X,Y\in\mathcal{A}$ be free both having the free Poisson distribution $\mu(\lambda,\alpha)$, $\mu(\kappa,\alpha)$, where $\lambda+\kappa>1$. Then random variables
	\begin{align*}
		U=(X+Y)^{-1/2}X(X+Y)^{-1/2}\qquad\mbox{and}\qquad V=X+Y
	\end{align*}
	are free.
\end{theorem}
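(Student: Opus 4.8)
The plan is to deduce the direct property from the dual free Lukacs property established in \cite{SzpojanWesol}, by inverting the Lukacs map and transporting freeness through an equality of joint $*$-distributions. Concretely, one never argues about $U,V$ directly; instead one builds a free model on the dual side, pushes it through the dual property, matches distributions, and then reads off freeness of $(U,V)$.

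First I would introduce, on a (possibly auxiliary) non-commutative probability space obtained as a free product, a pair $(U',V')$ of \emph{free} elements with $U'$ free binomial and $V'$ free Poisson, whose parameters are to be chosen. By the dual free Lukacs property, the images
$$
X'=V'^{1/2}U'V'^{1/2},\qquad Y'=V'^{1/2}(1-U')V'^{1/2}
$$
are then free, each free Poisson distributed. The bookkeeping step is to solve for the binomial parameters $(\sigma,\theta)$ and the free Poisson parameters of $V'$ so that the marginals of $X'$ and $Y'$ become exactly $\mu(\lambda,\alpha)$ and $\mu(\kappa,\alpha)$. This is a routine inversion of the explicit, $S$-transform level parameter correspondence occurring in the dual property (of the same flavour as the parameter maps appearing in Theorem~\ref{thm:11}), and I expect it to be solvable in the regime $\lambda+\kappa>1$; note that $X'+Y'=V'$, so $V'$ must be the free convolution of the two free Poisson marginals, which forces $V'$ to be free Poisson with rate $\lambda+\kappa$.

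Next I would argue that $(X',Y')$ and $(X,Y)$ have the same joint $*$-distribution. Both are free pairs, and by the previous step their marginals agree; since the joint $*$-distribution of a free pair is determined by its two marginals (all mixed free cumulants vanish), the two joint $*$-distributions coincide. Now consider the Lukacs map $\Phi(x,y)=\bigl((x+y)^{-1/2}x(x+y)^{-1/2},\,x+y\bigr)$, defined through continuous functional calculus. It is well defined precisely because $X+Y$ (equivalently $X'+Y'=V'$) is invertible: this is exactly where $\lambda+\kappa>1$ enters, since a free Poisson law of rate exceeding one carries no atom at $0$, so $(X+Y)^{-1/2}$ is a bounded element and $\Phi$ is built from norm-continuous functions of the pair. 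Applying $\Phi$ to two pairs with equal joint $*$-distribution yields images with equal joint $*$-distribution; thus $(U,V)=\Phi(X,Y)$ and $\Phi(X',Y')=(U',V')$ have the same joint $*$-distribution, the latter identity because $\Phi(X',Y')=\bigl(V'^{-1/2}X'V'^{-1/2},\,V'\bigr)=(U',V')$.

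Finally, since $(U',V')$ was chosen free and freeness is a property of the joint $*$-distribution alone, it transfers to $(U,V)$, which proves the claim. The main obstacle is the middle step: making precise that ``matching marginals plus freeness forces equality of the joint laws'' and that the nonlinear map $\Phi$, assembled from inverse square roots, genuinely preserves equality of $*$-distributions. Both hinge on the invertibility of $X+Y$ supplied by $\lambda+\kappa>1$, so checking that this hypothesis guarantees a bounded, norm-continuous functional calculus for $(X+Y)^{-1/2}$ is the crux of the argument; the parameter matching, by contrast, is a direct computation with $S$-transforms.
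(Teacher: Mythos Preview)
Your proposal is correct and follows essentially the same route as the paper's proof: build a free pair $(U',V')$ with the appropriate free binomial and free Poisson laws, apply the dual Lukacs property from \cite{SzpojanWesol} to get a free pair $(X',Y')$ with the same marginals as $(X,Y)$, identify joint $*$-distributions, and push through the Lukacs map using that $\lambda+\kappa>1$ keeps the spectrum of $X+Y$ away from zero. The paper simply writes down the parameters explicitly (take $U'$ free binomial $(\lambda,\kappa)$ and $V'$ free Poisson $(\lambda+\kappa,\alpha)$), whereas you leave the matching as a routine $S$-transform computation, but the logic is identical.
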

\begin{proof}
	Due to the hypothesis $\lambda+\kappa>1$, and implicit assumption that $\varphi$ is a faithful trace, element $X+Y$ is invertible. 
	
	Let $(\mathcal{B},\phi)$ be a non-commutative probability space,  $U_1,V_1\in\mathcal{B}$ free with free binomial $(\lambda,\kappa)$ and free
	Poisson $(\lambda+\kappa,\alpha)$ distributions with respectively. It suffices to show that
	
	\begin{align}\label{eqn:*}
		\varphi\left(\prod_j U^{m_j} V^{n_j}\right)=\phi\left(\prod_j U_1^{m_j} V_1^{n_j}\right).
	\end{align}

	for any non-negative integers $m_j,n_j$.
	
	Define $X_1=V_1^{1/2}U_1V_1^{1/2}$, $Y_1=V_1-X_1$. From \cite[Th. 3.2]{SzpojanWesol} we get that $X_1$ and $Y_1$ are free with free Poisson distributions $\mu(\lambda,\alpha)$ and	$\mu(\kappa,\lambda)$ respectively. Thus $(X,Y)$ and $(X_1,Y_1)$ have the same joint distribution.
	
	But $\prod_j U^{m_j} V^{n_j}=g(X,Y)$ for some function $g$. Since $\lambda+\kappa>1$ the spectrum of $V$ is bounded away from zero and thus $U$ is in $C^*$-subalgebra generated by $X$ and $Y$. Consequently, it can be approximated by polynomials in free variables $X$ and $Y$. Therefore, the distribution of $g(X,Y)$ is uniquely determined by the distributions of $X$ and $Y$, see e.g. \cite{NicaSpeicherLect}.
	
	Of course, $\prod_j U_1^{m_j} V_1^{n_j}=g(X_1,Y_1)$ for the same function $g$.
	
	Since the quantity $\varphi(g(X,Y))$ for $X$ and $Y$ free depends only on distributions of $X$ and $Y$ so $\varphi(g(X,Y))=\phi(g(X_1,Y_1))$ and the proof is completed.
\end{proof}
\subsection*{Acknowledgement}
We would like to thank the anonymous referee for careful reading of the manuscript and several remarks which improved the presentation of our results.

\bibliographystyle{plain}
\bibliography{Bibl}

\end{document}